\documentclass[10pt]{article}

\usepackage{mathptmx}

\usepackage{mathtools}

\usepackage[utf8]{inputenc}
\usepackage{array}
\usepackage{booktabs}
\usepackage{amsmath}
\usepackage{amsfonts}
\usepackage{amssymb}
\usepackage{amsthm}
\usepackage{extarrows}
\usepackage{hyperref}
\usepackage{mathrsfs}
\usepackage{tikz}
\usepackage{tikz-cd}
\usepackage{mathtools}
\usepackage{graphicx,multirow}
\usepackage{dsfont}
\usepackage{authblk}
\usepackage{float}

\usepackage[bottom]{footmisc}

\usepackage{subcaption}

\theoremstyle{plain}
\newtheorem{theorem}{Theorem}[section]
\newtheorem*{theorem*}{Theorem}
\newtheorem{theorem-non}{Theorem}
\newtheorem{proposition}{Proposition}[section]
\newtheorem{lemma}{Lemma}[section]
\newtheorem{corollary}{Corollary}[section]

\newtheorem{definition}{Definition}[section]

\theoremstyle{definition}

\numberwithin{equation}{section}

\usepackage[pagewise]{lineno}

\usepackage{xcolor}

\theoremstyle{remark}
\newtheorem{remark}[theorem]{Remark}
\newtheorem{example}{Example}[section]


\normalsize
\setlength\lineskip{1pt}
\setlength\parindent{1.2\parindent}
\setlength\normallineskip{1pt}

\setcounter{topnumber}{4}

\setcounter{bottomnumber}{1}

\setcounter{totalnumber}{5}

\textwidth  .72\paperwidth
\textheight .76\paperheight
\addtolength\textheight{\topskip}

\voffset -1in
\topmargin   .05\paperheight
\headheight  .02\paperheight
\headsep     .03\paperheight
\footskip    .07\paperheight

\marginparsep 9pt
\marginparpush 6pt

\hoffset -1in
\oddsidemargin .14\paperwidth
\evensidemargin .14\paperwidth
\marginparwidth .11\paperwidth

\setlength\arraycolsep{2pt}
\setlength\tabcolsep{6pt}
\setlength\arrayrulewidth{.4pt}
\setlength\doublerulesep{2pt}
\setlength\tabbingsep{\labelsep}
\setlength\fboxsep{3pt}
\setlength\fboxrule{.4pt}

\numberwithin{equation}{section}





\begin{document}

\title{Lax formalism for Gelfand-Tsetlin integrable systems}

%

\author[1]{Eder M. Correa\thanks{Eder M. Correa was supported by CNPq grant 150899/2017-3.}}
\author[2]{Lino Grama\thanks{Lino Grama is partially supported by FAPESP grant 2018/13481-0, and by CNPq grant 2019/305036-0.}}
\affil[1]{UFMG, Avenida Ant\^{o}nio Carlos, 6627, 31270-901 Belo Horizonte - MG, Brazil.}
\affil[2]{IMECC-Unicamp, Departamento de Matem\'{a}tica. Rua S\'{e}rgio Buarque de Holanda,
651, Cidade Universit\'{a}ria Zeferino Vaz. 13083-859, Campinas - SP, Brazil.}

\date{}

\setcounter{Maxaffil}{0}
\renewcommand\Affilfont{\itshape\small}


\maketitle
\begin{abstract} 
In the present work, we study Hamiltonian systems on (co)adjoint orbits and propose a Lax pair formalism for Gelfand-Tsetlin integrable systems defined on (co)adjoint orbits of the compact Lie groups ${\rm{U}}(n)$ and ${\rm{SO}}(n)$. In the particular setting of (co)adjoint orbits of ${\rm{U}}(n)$, by means of the associated Lax matrix we construct a family of algebraic curves which encodes the Gelfand-Tsetlin integrable systems as branch points. This family of algebraic curves enables us to explore some new insights into the relationship between the topology of singular Gelfand-Tsetlin fibers, singular algebraic curves and vanishing cycles. Further, we provide a new description for Guillemin and Sternberg’s action coordinates in terms of hyperelliptic integrals.
\end{abstract}

\tableofcontents

\section{Introduction}

The notion of Lax pair is a new emergent language used in the study of integrability, and one of the most important features of this concept is its relation with the classical $r$-matrix, see \cite{QUANTUM}. The classical $r$-matrix was introduced in late 1970’s by Sklyanin \cite{SKY}, as a part of a vast research program launched by L. D. Faddeev, which culminated in the discovery of the Quantum Inverse Scattering Method and of Quantum Groups, e.g. \cite{FEDDEV}. A Lax pair for a Hamiltonian system $(M,\omega,H)$, consists of two matrix-valued functions $(L,P)$ on the phase space $(M,\omega)$ of the system, such that the Hamiltonian evolution equation of motion associated to a Hamiltonian $H \in C^{\infty}(M)$ can be written as a ``zero curvature equation", also known as {\em{Lax equation}}, see for instance \cite{Lax} and \cite{Zakharov}. For more details about the relationship between the notion of zero curvature equation and the Lax equation, see for instance \cite[Chapter 9]{Guest}. In a broad sense, in which one allows the Lax pairs depending on a complex parameter $\zeta$, called the spectral parameter, i.e., $L = L(\zeta)$ and $P = P(\zeta)$, through the notion of {\textit{spectral curves}}\footnote{As mentioned in \cite{Donagi}, the notion of spectral curves arose historically out of the study of differential equations of Lax type. Following Hitchin’s work \cite{Hitchin}, they have acquired a central role in understanding the moduli spaces of vector bundles and Higgs bundles on a curve. } we have a very rich interplay between integrable Hamiltonian systems and algebraic curves. Interesting examples in this setting include periodic Toda lattices based on simple Lie algebras, the Neumann problem of the motion of a point on the sphere under a linear force, the free motion of a point on an ellipsoid, the Euler, Lagrange, and Kovalevskaya tops, and other integrable systems, see for instance \cite{Griffiths}, \cite{Adler}, \cite{Moser}, \cite{Beauville}, \cite{INTCLASSICALSYSTEM}, and references therein.

In this paper, we deal with the problem related to the formulation of the {\em Gelfand-Tsetlin integrable systems} on coadjoint orbits in terms of Lax pairs. The Gelfand-Tsetlin system was introduced in the 1980’s by Guillemin-Sternberg \cite{GELFORB}. They showed that for coadjoint orbits of the compact unitary Lie group ${\rm{U}}(n)$ the set of Poisson-commuting functions provided by a suitable application of Thimm's trick \cite{THIMM} defines a completely integrable system. The key point for the integrability of the Gelfand-Tsetlin systems is that the coadjoint action of ${\rm{U}}(n-1)$ on a coadjoint orbit of ${\rm{U}}(n)$ is {\em{coisotropic}}, see for instance \cite{INTMULT}. This last property also holds for coadjoint orbits of ${\rm{SO}}(n)$ (see also \cite{Heckman}) and, recently, a modern proof for the integrability of Gelfand-Tsetlin systems on coadjoint orbits of ${\rm{U}}(n)$ and ${\rm{SO}}(n)$ was given in \cite{Panyushev}. A remarkable feature of the Gelfand-Tsetlin systems is their relation with representation theory of Lie groups, Lie algebras, and geometric quantization, see for instance \cite{QUANTMULTFREE}. Also, this class of integrable systems has been playing an important role as a concrete model for the study of Lagrangian Floer theory on non-toric manifolds, e.g. \cite{NNU}, \cite{NU}, \cite{Cho}. 

As mentioned in \cite{Izosimov}, the algebraic geometric framework based on the notion of a Lax representation has proved to be very powerful not only for constructing new examples and explicit integration, but also for studying topological properties of integrable systems (e.g., \cite{Audin}, \cite{Audin1}). Motivated by these ideas and following the results provided in \cite{Cho} and \cite{Bouloc}, we also explore some new insights in the study of the relationship between the topology of singular Gelfand-Tsetlin fibers, singular algebraic curves and vanishing cycles. 

\subsection{Main results}

Our first result is concerned with the Lax pair formalism for the Gelfand-Tsetlin systems. The main contribution of this result is to provide a canonical and concrete way to assign Lax pairs to Gelfand-Tsetlin integrable systems for (co)adjoint orbits of ${\rm{U}}(n)$ and ${\rm{SO}}(n)$. In order to state our first result, let us introduce some basic concepts. Let $G$ be a compact connected Lie group with Lie algebra $\mathfrak{g}$. For every adjoint orbit $O(\Lambda) \subset \mathfrak{g}$, let $(O(\Lambda),\omega_{O(\Lambda)},G,\Phi)$ be the (canonical) associated Hamiltonian $G$-space, where $\omega_{O(\Lambda)}$ is the {\em Kirillov-Kostant-Souriau symplectic form}, and $\Phi \colon (O(\Lambda),\omega_{O(\Lambda)}) \to \mathfrak{g}$ is the moment map defined by the natural Hamiltonian action of $G$ on $(O(\Lambda),\omega_{O(\Lambda)})$. In this setting, we prove the following result:
\begin{theorem-non}
\label{Maintheorem}
Given a Hamiltonian $G$-space $(O(\Lambda),\omega_{O(\Lambda)},G,\Phi)$, where $G$ is either ${\rm{U}}(n) \  \text{or} \ \ {\rm{SO}}(n)$, then there exist $H \in C^{\infty}(O(\Lambda))$, and a pair of matrix-valued functions $L,P \colon (O(\Lambda),\omega_{O(\Lambda)}) \to \mathfrak{gl}(r,\mathbb{R})$, satisfying the Lax equation 
\begin{equation}
 \frac{d}{dt}L(\varphi_{t}(Z)) + \big [L(\varphi_{t}(Z)), P(\varphi_{t}(Z)) \big ] = 0,
\end{equation}
for all $Z \in O(\Lambda)$, where $\varphi_{t}(Z)$ denotes the flow of the Hamiltonian vector field $X_{H}$ through $Z \in O(\Lambda)$. Moreover, the solutions of the spectral equation
\begin{equation}
\label{speceq}
\det\big (w \mathds{1}_{r} - L\big) = 0, 
\end{equation}
provide a maximal set of conserved quantities in involution for the Hamiltonian system $(O(\Lambda),\omega_{O(\Lambda)},H)$ which coincides with the Gelfand-Tsetlin integrable system on $O(\Lambda)$.
\end{theorem-non}

The result above establish a description for Gelfand-Tsetlin integrable systems using only techniques involving Lax matrices. The ideas developed in our construction are quite natural and generalize in a suitable sense some results introduced in \cite{Giacobbe} for regular (co)adjoint orbits of ${\rm{U}}(n)$, see Remark \ref{RemarkGiacobbe}. As pointed out in \cite{Cho}, the Gelfand-Tsetlin system resembles a toric moment map in the sense that its image is a convex polytope $\Delta_{\Lambda}$ and the fiber over every interior point of $\Delta_{\Lambda}$ is a Lagrangian torus. The notable difference is that non-torus Lagrangian fibers may appear at some boundary strata of $\Delta_{\Lambda}$. In the general setting, as observed in \cite{BolsinovOshemkov}, the classical Arnold-Liouville theorem completely describes the Liouville foliation defined by an integrable system in a neighborhood of its regular leaves, but says almost nothing about its structure near singular leaves. In some sense, all topological properties of the system are determined by the structure of its singularities. The local properties of singularities of integrable systems were studied by several authors through normal forms, e.g. \cite{Vey}, \cite{Ito}, \cite{Eliasson}, \cite{MirandaZung}. In particular, several classical examples, such as Kovalevskaya top \cite{JPF}, Euler top \cite{JPFTarama}, the pendulum \cite{TaramaJPF} and the Clebsch top \cite{JPFTarama1}, provide a very rich interplay between certain elliptic fibrations and the Birkhoff normal forms, see also \cite{NarukiTarama}. In this sense, as an application of the ideas introduced in Theorem \ref{Maintheorem}, our second result aims to provide some new tools to investigate the behavior of the Liouville foliation defined by Gelfand-Tsetlin systems through certain families of complex algebraic curves. Based on the classification of Gelfand-Tsetlin fibers provided in \cite{Cho} by means of the combinatorics of ladder diagrams, we prove the following result:
\begin{theorem-non}
\label{Theo2}
Let $(O(\Lambda),\omega_{O(\Lambda)},\mathscr{H})$ be the Gelfand-Tsetlin integrable system associated to some adjoint orbit $O(\Lambda) \subset \mathfrak{u}(n)$ and let $\Delta_{\Lambda}$ be the corresponding Gelfand-Tsetlin polytope. Then, there exists a family of complex algebraic curves 
\begin{equation}
\textstyle{\mathcal{M}_{\Lambda} := \Big \{\textstyle{{\mathcal{C}_{Z} = {\rm{Spec}}}\Big(\frac{\mathbb{C}[w,\zeta]}{\langle \zeta^{2} - P_{\Lambda}(w;Z) \rangle}\Big ) }\ \Big | \ Z \in O(\Lambda) \Big\}},
\end{equation}
such that $P_{\Lambda}(w;Z) := r_{\Lambda}(w)\det( w\mathds{1}_{r} - L(Z))$, for every $Z \in O(\Lambda)$, and $r_{\Lambda}(w) \in \mathbb{C}(w)$, satisfying the following:
\begin{enumerate}
\item[1)] For all $Z \in O(\Lambda)$, we have $P_{\Lambda}(w;Z) = \mu_{\Lambda}(w)P_{1}(w;Z) \cdots P_{n-1}(w;Z)$, such that $\mu_{\Lambda}(w)$ is the minimal polynomial of $\Lambda$ and $P_{k}(w;Z) \in \mathbb{C}[w]$, $\forall k = 1,\ldots, n-1$;
\item[2)] Given ${\bf{u}} \in \Delta_{\Lambda}$, then $\mathscr{H}^{-1}({\bf{u}})$ is a Lagrangian torus if and only if $\mathcal{C}_{Z}$ is a smooth hyperelliptic curve, for some $Z \in \mathscr{H}^{-1}({\bf{u}})$;
\item[3)] If ${\bf{u}} \in {\rm{int}}(\mathcal{F})$, for some $d$-dimensional face $\mathcal{F}$ of $\Delta_{\Lambda}$, then $\mathcal{C}_{Z}$ is singular for all $Z \in \mathscr{H}^{-1}({\bf{u}})$;
\item[4)] In the setting of item 3), if $0 \leq {\rm{Res}}\Big(\frac{P'_{k}(w;Z)}{P_{k}(w;Z)};\alpha\Big) \leq 1$, for all $(\alpha,0) \in {\rm{Sing}}(\mathcal{C}_{Z})$, and for all $1 \leq k \leq n-1$, then $\mathscr{H}^{-1}({\bf{u}}) \cong T^{d}$; 
\item[5)] If ${\bf{u}} \in {\rm{int}}(\mathcal{F})$, for some $d$-dimensional face $\mathcal{F}$ of $\Delta_{\Lambda}$, and $\mathscr{H}^{-1}({\bf{u}})$ is a non-torus fiber, then there exists some $(\alpha,0) \in {\rm{Sing}}(\mathcal{C}_{Z})$, for some $Z \in \mathscr{H}^{-1}({\bf{u}})$, such that ${\rm{mult}}_{\alpha}(P_{k}(w;Z)) > 1$, for some $1 \leq k \leq n-1$;
\item[6)] For all $Z \in O(\Lambda)^{\mathscr{H}}$ there exists an Abelian differential $\Omega(Z) \in \mathscr{M}^{1}(\mathcal{C}_{Z})$, with poles $p_{jk}(Z) \in \mathcal{C}_{Z}$, $(j,k) \in \mathcal{I}(\Lambda)$, such that 
\begin{equation}
\mathscr{H}(Z) = \Bigg (\frac{1}{2\pi\sqrt{-1}}\oint_{c_{jk}}\Omega(Z)\Bigg)_{(j,k) \in \mathcal{I}(\Lambda)},
\end{equation}
where $c_{jk}$ is a small loop around the point $p_{jk}(Z) \in \mathcal{C}_{Z}$, for every $(j,k) \in \mathcal{I}(\Lambda)$.
\end{enumerate}
\end{theorem-non}
The family of algebraic curves $\mathcal{M}_{\Lambda}$ introduced in the above theorem enables us to explore by means of the concrete low dimensional standard examples $\mathbb{C}{\rm{P}}^{1}$, ${\rm{Fl}}(3)$ and ${\rm{Gr}}(2,4)$, some new insights into the relationship between the topology of singular Gelfand-Tsetlin fibers, singular algebraic curves and vanishing cycles. More precisely, we classify Gelfand-Tsetlin fibers for the aforementioned examples using elementary tools coming from the theory of algebraic curves and we provide a characterization for singular Gelfand-Tsetlin fibers in terms of the vanish of certain homology cycles of a generic smooth element in $\mathscr{M}_{\Lambda}$. Our analysis suggests that questions concerned with the topology and dynamics of the Gelfand-Tsetlin systems, specially those related to the singular locus of the underlying Liouville foliation, can be approached using the theory of algebraic curves and related topics. 


%
%
%


\section{Collective Hamiltonians and Gelfand-Tsetlin integrable systems}

In this section, we provide a brief overview about Hamiltonian systems defined by collective Hamiltonians. The main purpose is to cover the basic material in this topic and explain Thimm's trick \cite{THIMM}.


\subsection{Collective Hamiltonian systems}


Let $(M,\omega)$ be a symplectic manifold and $\tau \colon G \to {\text{Diff}}(M)$ be a smooth action. The action $\tau$ is said to be Hamiltonian  if and only if it admits a moment map $\Phi \colon (M,\omega) \to \mathfrak{g}^{\ast}$, see for instance \cite[\S 10.1]{Rudolph}. We say that the moment map $\Phi$ associated to $\tau$ is \textit{equivariant} if it satisfies
$$\Phi(\tau(g)p) = {\text{Ad}}^{\ast}(g)\Phi(p),$$
for every $p \in M$, $X \in \mathfrak{g}$ and $g \in G$. In this work we are concerned with the following setting.


\begin{definition} A Hamiltonian $G$-space $(M,\omega,G,\Phi)$ is composed of: 
\begin{itemize}
\item A symplectic manifold  $(M,\omega)$ and a connected Lie group $G$, with Lie algebra $\mathfrak{g}$.

\item A Hamiltonian (left) Lie group action $\tau \colon G \to {\text{Diff(M)}}$, with associated infinitesimal action $\delta \tau\colon \mathfrak{g} \to \mathfrak{X}(M)$.

\item A moment map $\Phi \colon (M,\omega) \to \mathfrak{g}^{\ast}$.

\end{itemize}

\end{definition}

\begin{definition}

A Hamiltonian system is defined by a triple $(M,\omega,H)$, where $(M,\omega)$ is a symplectic manifold and $H \in C^{\infty}(M)$.

\end{definition}

Given a compact connected Lie group $G$, with Lie algebra $\mathfrak{g}$, we are interested in the study of a certain class of Hamiltonian systems associated to Hamiltonian $G$-spaces $(O(\lambda),\omega_{O(\lambda)},G,\Phi)$, such that\footnote{For more details see \cite[\S 6.1.1-6.1.2]{Singspaces}} $O(\lambda) \subset \mathfrak{g}^{\ast}$ is the coadjoint orbit through some $\lambda \in \mathfrak{g}^{\ast}$, $\omega_{O(\lambda)}$ is the Kirillov-Kostant-Souriau symplectic form and $\Phi \colon O(\lambda) \to \mathfrak{g}^{\ast}$ is defined by the natural inclusion map $O(\lambda) \hookrightarrow \mathfrak{g}^{\ast}$.






\begin{remark}
\label{Adjointorbit}
By fixing an ${\rm{Ad}}$-invariant inner product $(-,-)$ on $\mathfrak{g}$, we obtain an isomorphism $\mathfrak{g}\simeq \mathfrak{g}^\ast$. From this, given $(O(\lambda),\omega_{O(\lambda)},G,\Phi)$, we can always consider $(O(\Lambda),\omega_{O(\Lambda)})$, where $\lambda = (\Lambda,-)$, for some $\Lambda \in \mathfrak{g}$, such that $O(\Lambda) \subset \mathfrak{g}$ is the adjoint orbit through $\Lambda \in \mathfrak{g}$, and  
\begin{center}
$\omega_{O(\Lambda)}({\text{ad}}(X)Z,{\text{ad}}(Y)Z) = -(Z,[X,Y])$, \ \ \ $\forall X,Y \in \mathfrak{g}$.
\end{center} 
In the above setting, the adjoint action on $O(\Lambda)$ defines a Hamiltonian action and, from the isomorphism $\mathfrak{g}\simeq \mathfrak{g}^\ast$, we have an equivariant moment map $\Phi \colon O(\Lambda) \to \mathfrak{g}^{\ast}$ defined by the natural inclusion map $O(\Lambda) \hookrightarrow \mathfrak{g}$. 

\begin{remark}
In the setting of Hamiltonian $G$-spaces defined by (co)adjoint orbits, throughout this work we shall assume that $G$ is compact. Therefore, the identification described in the above remark will be used without many explanations. Also, we shall assume some basic results about the theory of compact Lie groups, e.g. \cite{Sepanski}.
\end{remark}
\end{remark}

Let $\mathfrak{g}$ be the Lie algebra of a compact and connected Lie group $G$. We have a Poisson bracket $ \{ \cdot,\cdot  \}_{\mathfrak{g}^{\ast}}$ on the manifold $\mathfrak{g}^{\ast}$ defined as follows\footnote{In general, given a vector space $V$, there exists a correspondence between Lie algebra structures on $V$ and linear Poisson structures on $V^{\ast}$, see for instance \cite[p. 367]{Rudolph}.}. Given $F_{1}, F_{2} \in C^{\infty}(\mathfrak{g}^{\ast})$ and $\xi \in \mathfrak{g}^{\ast}$, we set
$$\big \{ F_{1},F_{2}\big \}_{\mathfrak{g}^{\ast}}(\xi) = - \big \langle \xi , \big [(dF_{1})_{\xi},(dF_{2})_{\xi} \big ] \big \rangle,$$
where we see $(dF_{i})_{\xi}$, $i = 1,2$, as elements of $\mathfrak{g}$ via the natural identification $T_{\xi}^{\ast}\mathfrak{g}^{\ast} \cong \mathfrak{g}$. Also, for a fixed $\xi \in \mathfrak{g}^{\ast}$, it will be convenient to denote by $\nabla F(\xi)$ the element of $\mathfrak{g}$ which satisfies the pairing
$$
(dF)_{\xi}(\eta) = \big \langle \eta , \nabla F(\xi)  \big \rangle,
$$
for every $F \in  C^{\infty}(\mathfrak{g}^{\ast})$ and every $\eta \in T_{\xi}\mathfrak{g}^{\ast}$. From this, we can rewrite the previous expression of $ \{ \cdot,\cdot \}_{\mathfrak{g}^{\ast}}$ as follows:
$$\big \{ F_{1},F_{2}\big \}_{\mathfrak{g}^{\ast}}(\xi) = - \big \langle \xi ,\big [\nabla F_{1}(\xi),\nabla F_{2}(\xi) \big ] \big \rangle.$$
With the bracket above, the pair $(\mathfrak{g}^{\ast}, \{ \cdot,\cdot \}_{\mathfrak{g}^{\ast}})$ defines a Poisson manifold, e.g. \cite[Example 1.1.3]{QUANTUM}. 

Given a Hamiltonian system $(M,\omega,H)$, denoting by $\big \{ \cdot, \cdot \big \}_{M}$ the Poisson structure induced by the symplectic structure $\omega$, we are interested in the following concept of integrability.

\begin{definition}[Liouville integrability]
Let $(M,\omega,H)$ be a Hamiltonian system. We say that such a system is integrable if there exist $H_{1}, \ldots, H_{n} \colon (M,\omega) \to \mathbb{R}$, such that $H_{i} \in C^{\infty}(M)$, for each $i = 1,\ldots,n = \frac{1}{2}\dim_{\mathbb{R}}(M)$, with $H = H_{1}$, satisfying 

\begin{itemize}

    \item $\big \{H_{i},H_{j} \big \}_{M} = 0$, for all $i,j = 1,\ldots,n$,
    
    \item $dH_{1} \wedge \cdots \wedge dH_{n} \neq 0 $, in an open dense subset of $M$.
    
\end{itemize}

\end{definition}

\begin{remark}
In the setting of the above definition, by considering the induced map $\mathscr{H} \colon (M,\omega) \to \mathbb{R}^{n}$, such that $\mathscr{H} = (H_{1}, \ldots, H_{n})$, we denote an integrable system by $(M,\omega,\mathscr{H})$.
\end{remark}

From the first item of the integrability condition described above, in order to study integrable systems it will be useful to consider the following concept. 

\begin{definition}
Let $(M, \{ \cdot,\cdot \}_{M})$ be a Poisson manifold. A smooth function $C \in C^{\infty}(M)$, is called a Casimir function if it satisfies
$$\big \{ C,F \big \}_{M} = 0,$$
for every $F \in C^{\infty}(M)$.
\end{definition}

\begin{example}
Consider the Poisson manifold $(\mathfrak{g}^{\ast}, \{ \cdot,\cdot \}_{\mathfrak{g}^{\ast}})$ described previously. In this particular case, we have that the Casimir functions of $(\mathfrak{g}^{\ast}, \{ \cdot,\cdot \}_{\mathfrak{g}^{\ast}})$ are exactly the ${\text{Ad}}^{\ast}$-invariant functions. For a more general discussion about Casimir functions with respect to the Lie-Poisson bracket, see for instance \cite[p. 463]{INTROMECH}.   
\end{example}
\begin{remark}
In what follows, given $(\mathfrak{g}^{\ast}, \{ \cdot,\cdot \}_{\mathfrak{g}^{\ast}})$, we shall denote by $C^{\infty}(\mathfrak{g}^{\ast})^{{\text{Ad}^{\ast}}}$ the subalgebra of  ${\text{Ad}}^{\ast}$-invariant functions defined on $\mathfrak{g}^{\ast}$.
\end{remark}
In this work we are interested in studying Hamiltonian systems defined by the following special class of functions.
\begin{definition}
Let $(M,\omega,G,\Phi)$ be a Hamiltonian $G$-space. A collective Hamiltonian on $M$ is a function of the form $H = \Phi^{\ast}(F) \in C^{\infty}(M)$, where $F \in  C^{\infty}(\mathfrak{g}^{\ast})$.
\end{definition}
Now we will provide an expression for the Hamiltonian vector field $X_{H} \in \mathfrak{X}(M)$, associated to a collective Hamiltonian $H = \Phi^{\ast}(F) \in C^{\infty}(M)$, for more details see \cite[p. 241]{COL}.

At first, note that by fixing a basis $\{X_{i}\}$ for $\mathfrak{g}$ and denoting by $\{X_{i}^{\ast}\}$ its dual, we have 
$$\Phi = \displaystyle \sum_{i}\Phi^{i}X_{i}^{\ast}, \ \ \mbox{  and  } \ \ D\Phi = \displaystyle \sum_{i}d \langle \Phi,X_{i} \rangle X_{i}^{\ast},$$
where each component function $\Phi^{i} = \langle \Phi,X_{i} \rangle$ satisfies the equation $d \langle \Phi,X_{i} \rangle + \iota_{\delta \tau(X_{i})} \omega = 0.$
Recall that $\delta \tau$ denotes the infinitesimal action associated to the Hamiltonian action $ \tau \colon G \to {\text{Diff}}(M)$. Therefore, given $H = \Phi^{\ast}(F) \in C^{\infty}(M)$, we have
$$dH = dF \circ D\Phi = dF(\displaystyle \sum_{i}d \langle \Phi,X_{i} \rangle X_{i}^{\ast}) = - \displaystyle \sum_{i} \langle X_{i}^{\ast}, (\nabla F) \circ \Phi \rangle \iota_{\delta \tau(X_{i})} \omega = - \iota_{\delta \tau((\nabla F) \circ \Phi)} \omega.$$
Hence, we obtain
\begin{equation}
\label{Hvectorcollective}
X_{H} = \delta \tau((\nabla F) \circ \Phi).
\end{equation}

Let us illustrate the ideas above by means of an example which will be useful afterwards in this work. Further discussions about the Hamiltonian flow of collective Hamiltonians can be found in \cite[p. 241-242]{COL}.
\begin{example}
Consider now the Hamiltonian $G$-space $(O(\lambda),\omega_{O(\lambda)},G,\Phi)$. If we take a collective Hamiltonian $H = \Phi^{\ast}(F) \in  C^{\infty}(O(\lambda))$, from Eq. \ref{Hvectorcollective} we have $X_{H} = {\text{ad}}^{\ast}((\nabla F) \circ \Phi).$ Thus, since $\Phi$ is just the inclusion map $O(\lambda) \hookrightarrow \mathfrak{g}^{\ast}$, it follows that the dynamical system defined by $X_{H}$, such that $H = \Phi^{\ast}(F) \in  C^{\infty}(O(\lambda))$, can be understood through the Hamiltonian system $(\mathfrak{g}^{\ast},\{\cdot,\cdot\}_{\mathfrak{g}^{\ast}},F)$.
\end{example}

\subsection{Thimm's trick and Gelfand-Tsetlin integrable systems} \label{GT-system}

Now we will describe how to obtain conserved quantities in involution when we consider Hamiltonian systems defined by collective Hamiltonians. In what follows, unless otherwise stated, given  a Hamiltonian $G$-space $(M,\omega,G,\Phi)$, we shall assume the equivariance condition for the moment map $\Phi \colon (M,\omega) \to \mathfrak{g}^{\ast}$. Under this assumption, we have that the map $\Phi \colon (M,\{\cdot,\cdot\}_{M}) \to (\mathfrak{g}^{\ast},\{\cdot,\cdot\}_{\mathfrak{g}^{\ast}})$ is a Poisson map, see for instance \cite[p. 497]{Rudolph}), so we obtain
$$\big \{F \circ \Phi , I \circ \Phi \big \}_{M}(p) = \big \{F , I \big \}_{\mathfrak{g}^{\ast}}(\Phi(p)), $$
for every $p \in M$ and $F,I \in C^{\infty}(\mathfrak{g}^{\ast})$. Therefore, from Chevalley's theorem one can find ${\text{rank}}(G)$ independent Poisson commuting functions from the Casimir functions of $(\mathfrak{g}^{\ast}, \{ \cdot,\cdot \}_{\mathfrak{g}^{\ast}})$. However, it is often the case that ${\text{rank}}(G) < \frac{1}{2}\dim_{\mathbb{R}}(M)$. In order to find additional independent Poisson commuting functions, one can proceed following the ideas of the well-known Thimm's trick \cite[Proposition 4.1]{THIMM}. More precisely, if we consider a closed and connected Lie subgroup $K \subset G$, we have a natural Hamiltonian action of $K$ on $(M,\omega)$ induced by restriction. Thus, we obtain a Hamiltonian $K$-space $(M,\omega,K,\Phi_{K})$, where the moment map $\Phi_{K} \colon (M,\omega) \to \mathfrak{k}^{\ast} = {\text{Lie}}(K)^{\ast}$, is given by 
$$\Phi_{K} = \pi_{K} \circ \Phi,$$
such that $\pi_{K} \colon \mathfrak{g}^{\ast} \to \mathfrak{k}^{\ast}$ is the projection induced by the inclusion $\mathfrak{k} \hookrightarrow \mathfrak{g}$. From this, if we take two collective Hamiltonians $\Phi^{\ast}(F),\Phi_{K}^{\ast}(I) \in C^{\infty}(M)$, we obtain 
$$\big \{F \circ \Phi , I \circ \Phi_{K} \big \}_{M} = \big \{F \circ \Phi , I \circ \pi_{K} \circ \Phi \big \}_{M} =  \big \{F , I \circ \pi_{K} \big \}_{\mathfrak{g}^{\ast}} \circ \Phi.$$
Thus, all collective Hamiltonians obtained from the Casimir functions of $(\mathfrak{g}^{\ast},\{\cdot,\cdot\}_{\mathfrak{g}^{\ast}})$ and $(\mathfrak{k}^{\ast},\{\cdot,\cdot\}_{\mathfrak{k}^{\ast}})$ are conserved quantities in involution for the Hamiltonian system $(M,\omega,\Phi_{K}^{\ast}(I))$, e.g. \cite{THIMM}, \cite{COL}, \cite{GELFORB}. As an application of the above ideas, we obtain the following general construction:

\begin{itemize}

\item Consider the Hamiltonian $G$-space $(O(\Lambda),\omega_{O(\Lambda)},G,\Phi)$ as in Remark \ref{Adjointorbit}. Given a nested chain of closed and connected subgroups $G = K_{0} \supset K_{1} \supset \cdots \supset K_{s}$, associated to each Hamiltonian $K_{i}$-space  $(O(\Lambda),\omega_{O(\Lambda)},K_{i},\Phi_{i})$, we have $\Phi_{i} = \pi_{i}^{(l)} \circ \Phi_{l}$, where $\pi_{i}^{(l)} \colon \mathfrak{k}_{l} \to \mathfrak{k}_{i}$ denotes the projection induced by the inclusion $\mathfrak{k}_{i} \hookrightarrow \mathfrak{k}_{l}$, $0 \leq l < i \leq s$.

\item From the above data, if one considers a Hamiltonian system $(O(\Lambda),\omega_{O(\Lambda)},\Phi_{s}^{\ast}(I))$, for some $I \in C^{\infty}(\mathfrak{k}_{s})$, applying Thimm's trick iteratively we obtain the following set of conserved quantities in involution 
\begin{equation}
\label{GTS}
H^{(i)}_{j} = \Phi_{i}^{\ast}(I_{j}^{(i)}), \ \ \ I_{j}^{(i)} \in C^{\infty}(\mathfrak{k}_{i})^{{\text{Ad}}},
\end{equation}
where $1 \leq j \leq r_{i}$, $r_{i} = {\text{rank}}(K_{i})$, $1 \leq i \leq s$. 
\end{itemize}

When the integrability condition holds\footnote{For more details we suggest \cite{Panyushev}.} for the set of conserved quantities in involution described above, the integrable system is called \textit{Gelfand-Tsetlin integrable system} \cite{GELFORB}. In concrete cases it is convenient to work with the action coordinate of the Gelfand-Tsetlin integrable system. In order to obtain theses action coordinates one can proceed as follows: Let $T \subset G$ be a Cartan subgroup and let $G = T_{0} \supset T_{1} \supset \cdots \supset T_{s}$ be a sequence of subgroups of $T$ such that $T_{i} \subset K_{i}$ is a Cartan subgroup, $i = 1,\ldots s$. Now by taking a positive Weyl chamber $(\mathfrak{t}_{i})_{+} \subset \mathfrak{t}_{i}$, for all $i = 1,\ldots,s$, one can consider the (continuous) \textit{sweeping map} 
\begin{equation}
s_{i} \colon \mathfrak{k}_{i} \to (\mathfrak{t}_{i})_{+}, 
\end{equation}
which is defined by letting $s_{i}(X)$ be the unique element of the set $({\text{Ad}}(K_{i})X) \cap (\mathfrak{t}_{i})_{+}$. For each $i = 1,\ldots,s$, chose a basis $\xi_{i1}, \ldots,\xi_{ir_{i}}$ of the integer lattice in $\mathfrak{t}_{i}$ and define
\begin{equation}
\label{GSAction}
\Lambda_{ij}:= l_{\xi_{ij}}^{+} \circ s_{i} \circ \Phi_{i},
\end{equation}
where $l_{\xi_{ij}}^{+}$ is the restriction to $(\mathfrak{t}_{i})_{+}$ of the linear functional $l_{\xi_{ij}} = (-,\xi_{ij})$, here we consider some fixed ${\rm{Ad}}$-invariant inner product $(-,-)$ on $\mathfrak{g}$. The set of functions $\{\Lambda_{ij}\}$ defines {\textit{Guillemin and Sternberg’s action coordinates}} on $O(\Lambda)$, see for instance \cite[p. 119]{GELFORB} and \cite{Lane}. We also shall refer to the set of functions $\{\Lambda_{ij}\}$ as Gelfand-Tsetlin integrable system. Further, a more detailed exposition about the above construction for the particular case of adjoint orbits of ${\rm{U}}(n)$ will be carried out afterward in Section \ref{GTU(n)}. 

\section{Lax pairs and spectral equation}

In this section, we will introduce some basic ideas about the concept of Lax pairs and  describe their relation with the study of integrability in the context of Hamiltonian systems. More details about this topic can be found in \cite{INTCLASSICALSYSTEM}, and \cite[p. 578]{Rudolph}.



\begin{definition}
A Lax pair for a Hamiltonian system $(M,\omega,H)$ is defined by a pair of matrix-valued smooth functions $L,P \colon (M,\omega) \to \mathfrak{gl}(r,\mathbb{R})$, such that the
equation of motion associated to $H \in  C^{\infty}(M)$ is equivalent to the equation 
\begin{equation}\label{lax-eq}
 \frac{d}{dt}L + \big [L,P \big] = 0.
\end{equation}
\end{definition}
\begin{remark}
We observe that the derivative in the definition above is taken when we consider the composition of $L$ with the Hamiltonian flow of $X_{H} \in \mathfrak{X}(M)$. 
\end{remark}
The advantage of dealing with Eq. \ref{lax-eq} instead of the equation of motion induced by $H \in  C^{\infty}(M)$ is that it can be easily solved. Actually, if we consider the initial value problem 
$$\displaystyle \frac{d}{dt}L = \big [ P,L \big], \ \ \ \mbox{   with   } \ \ \  L(0) = L_{0},$$
its solution is given by $L(t) = g(t)L_{0}g(t)^{-1}$, where $g \colon (-\epsilon,\epsilon) \to {\rm{GL}}(r,\mathbb{R})$ is determined by the initial value problem 
$$\displaystyle \frac{dg}{dt} = P(t)g(t), \ \ \ \mbox{    with   } \ \ \ g(0) = \mathds{1}.$$

\begin{example}[Harmonic Oscillator] A basic example to illustrate the previous discussion is provided by the harmonic oscillator. Consider the Hamiltonian system $(\mathbb{R}^{2},dp\wedge dq, H)$, where the Hamiltonian function is given by
\begin{equation}
H(q,p) = \displaystyle \frac{1}{2}\big (p^{2} + C^{2}q^{2} \big ),
\end{equation}
such that $C \in \mathbb{R} \backslash \{0\}$. A straightforward computation shows that
$$dH + \iota_{X_{H}} (dp\wedge dq) = 0 \iff X_{H} = p \partial_{q} - C^{2}q\partial_{p}.$$
From this, we obtain the following equations of motion
$$\displaystyle \frac{dq}{dt} = p, \ \ \  \mbox{  and  } \ \ \ \displaystyle \frac{dp}{dt} = -C^{2} q.$$
We have a Lax pair $L,P \colon (\mathbb{R}^{2},dp\wedge dq) \to \mathfrak{gl}(2,\mathbb{R})$ for the Hamiltonian system $(\mathbb{R}^{2},dp\wedge dq, H)$ defined by
$$L = \begin{pmatrix}
 p & \ C q \\
C q & -p
\end{pmatrix}, \ \ \  \mbox{   and   } \ \ \ P = \displaystyle \frac{1}{2}\begin{pmatrix}
 0 & - C\\
C & \ \ 0
\end{pmatrix}.$$
In fact, from a straightforward computation one can check that 
$$
\displaystyle \frac{d}{dt}L + \big [L,P \big] = 0 \iff \begin{cases}
    \frac{dq}{dt} = p, \\
    \frac{dp}{dt} = -C^{2}q.                    \\
  \end{cases}$$
Notice that $H(q,p) = -\displaystyle \frac{1}{2}\det(L) = \displaystyle \frac{1}{4}{\text{Tr}}(L^{2}).$
\end{example}

The key point which makes the existence of Lax pairs an important tool in the study of a Hamiltonian systems is the following. Suppose we have a Lax pair $(L,P)$ for a Hamiltonian system $(M,\omega,H)$. If we consider a smooth function $F \colon \mathfrak{gl}(r,\mathbb{R}) \to \mathbb{R}$ which is invariant by the adjoint action, i.e.
$$F(gXg^{-1}) = F(X), \mbox{   for all  } g \in {\rm{GL}}(r,\mathbb{R}), \mbox{  and  } X \in \mathfrak{gl}(r,\mathbb{R}), $$
and consider the composition $I = F \circ L \in C^{\infty}(M)$. Then, we obtain a function which is constant over the Hamiltonian flow of $X_{H} \in \mathfrak{X}(M)$. In fact, we have
$$I(t) = F(L(t)) = F( g(t)L_{0}g(t)^{-1}) = F(L_{0}) = {\text{constant}}.$$
Therefore, 
$$\big \{ H,I \big \}_{M} = X_{H}(I) = 0.$$
Hence, one can obtain conserved quantities from the procedure described above. More precisely, if we consider a Hamiltonian system $(M,\omega,H)$ which admits a Lax pair $L,P \colon M \to \mathfrak{gl}(r,\mathbb{R})$, then the coefficients of the characteristic polynomial of $L$, namely,
\begin{equation}
\label{Characteristicpoly}
\det(w\mathds{1}_{r} - L) = f_{r}(L)w^{r} + f_{r-1}(L)w^{r-1} + \ldots + f_{1}(L)w + f_{0}(L),
\end{equation}
provide a set of conserved quantities for the Hamiltonian system $(M,\omega,H)$. Moreover, if $L = U\Lambda U^{-1}$, where $U \colon (M,\omega) \to {\rm{GL}}(r,\mathbb{R})$ and $\Lambda \colon M \to \mathfrak{gl}(r,\mathbb{R})$ is a diagonal matrix of the form
$$\Lambda = {\text{diag}}(\Lambda_{1}, \ldots, \Lambda_{r}),$$
it follows that the functions defined by the eigenvalues of $L$ are conserved quantities for the Hamiltonian system $(M,\omega,H)$. The equation 
\begin{equation}
\det(w\mathds{1}_{r} - L) = 0,
\end{equation}
is called the {\textit{spectral equation}} associated to $L$, and we refer to the constants of motion obtained from the characteristic polynomial of $L$ as {\textit{spectral invariants}} of $L$. 

\begin{remark}
It is worth mentioning that the involution property for the eigenvalue of $L$ is equivalent to the existence of a $r$-matrix on the phase space, see for instance \cite[\S 2.5]{INTCLASSICALSYSTEM}.
\end{remark}


\section{A Lax pair formalism for Gelfand-Tsetlin integrable systems}

In this section, we prove our first result (Theorem \ref{Maintheorem}), which consists of providing a Lax pair formulation for the Gelfand-Tsetlin integrable system. In order to do this, we reformulate the construction described in Section \ref{GT-system} purely in terms of Lax equations, see for instance Theorem \ref{prop43} and Corollary \ref{C5S5.2Teo5.2.4}.

\subsection{Lax equation and collective Hamiltonians}

Let us start by describing the relationship between collective Hamiltonians and the Lax equation\footnote{For more details we suggest \cite{Perelomov} and references therein.}. As we have seen in the Section \ref{GT-system}, the functions which compose Gelfand-Tsetlin systems are given by collective Hamiltonians, i.e., if we consider a Hamiltonian $G$-space $(M,\omega, G,\Phi)$, we can take $F \in C^{\infty}(\mathfrak{g}^{\ast})$ and consider the smooth function given by 
$$\Phi^{\ast}(F) = F \circ \Phi \colon M \to \mathbb{R}.$$
The Hamiltonian vector field associated to a function defined as above has the following expression
$$X_{\Phi^{\ast}(F)}(p) = \delta \tau(\nabla F(\Phi(p)))_{p},$$
for every $p \in M$, see for instance Eq.  \ref{Hvectorcollective}. If we consider the Hamiltonian $G$-space $(O(\lambda),\omega_{O(\lambda)},G,\Phi)$ and take a collective Hamiltonian $\Phi^{\ast}(F) \in  C^{\infty}(O(\lambda))$, we have 
$$X_{\Phi^{\ast}(F)}(\xi) = {\text{ad}}^{\ast}(\nabla F(\Phi(\xi))) \xi,$$
for every $\xi \in O(\lambda)$. By means of an ${\text{Ad}}$-invariant isomorphism  $\mathfrak{g}^{\ast} \cong \mathfrak{g}$ given by some ${\text{Ad}}$-invariant inner product, and the identification between coadjoint and adjoint orbits $O(\lambda) \cong O(\Lambda)$, we obtain from the ordinary differential equation associated to $X_{\Phi^{\ast}(F)}$ the following expression
$$\displaystyle \frac{d}{dt}\varphi_{t}(Z) = X_{\Phi^{\ast}(F)}(\varphi_{t}(Z)) = {\text{ad}}(\nabla F(\Phi(\varphi_{t}(Z))))\varphi_{t}(Z),$$
for every initial condition $Z \in O(\Lambda)$. Since the moment map $\Phi \colon O(\Lambda) \to \mathfrak{g}$ is just the inclusion map, we have the following equation for every $Z \in O(\Lambda)$
$$\displaystyle \frac{d}{dt}\Phi(\varphi_{t}(Z)) = \big [\nabla F(\Phi(\varphi_{t}(Z)), \Phi(\varphi_{t}(Z)) \big ].$$
Notice that, if we denote $X = \nabla F(\Phi(\varphi_{t}(Z))$ and $Y = \varphi_{t}(Z)$, we have
$$\displaystyle \frac{d}{dt}\Phi(\varphi_{t}(Z)) = (D\Phi)_{Y}({\text{ad}}(X)Y) = {\text{ad}}(X)\Phi(Y) = \big [X, \Phi(Y) \big].$$
From this, we have the following proposition:
\begin{proposition}
\label{C5S5.1P5.1.1}
Given the Hamiltonian $G$-space $(O(\Lambda),\omega_{O(\Lambda)},G,\Phi)$, the dynamic associated to any collective Hamiltonian is completely determined by a zero curvature equation
\begin{equation}
\label{EulerLax}
\displaystyle \frac{d}{dt}L + \big [L,P \big] = 0,
\end{equation}
where $L,P \colon (O(\Lambda),\omega_{O(\Lambda)}) \to \mathfrak{g}$ are Lie algebra-valued smooth functions.
\end{proposition}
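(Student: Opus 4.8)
The plan is to read off the Lax pair directly from the equation of motion for collective Hamiltonians derived in the paragraphs preceding the statement, rather than appealing to the artificial construction of Remark \ref{existence}. Concretely, given a collective Hamiltonian $H = \Phi^{\ast}(F)$ with $F \in C^{\infty}(\mathfrak{g}^{\ast})$, I would simply set
$$L := \Phi \qquad \text{and} \qquad P := \nabla F \circ \Phi,$$
both regarded as $\mathfrak{g}$-valued functions on $O(\Lambda)$ via the fixed ${\text{Ad}}$-invariant identification $\mathfrak{g}^{\ast} \cong \mathfrak{g}$ of Remark \ref{Adjointorbit}. Since $\Phi$ is the inclusion $O(\Lambda) \hookrightarrow \mathfrak{g}$, one has $L(Z) = Z$ for every $Z \in O(\Lambda)$, so $L$ tautologically records the point of the orbit while $P$ records the gradient of $F$ there.

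First I would invoke Equation \ref{Hvectorcollective} in the adjoint picture, which gives the Hamiltonian vector field of $H = \Phi^{\ast}(F)$ as $X_{H} = {\text{ad}}(\nabla F \circ \Phi)$. Writing $\varphi_{t}$ for its flow, the integral curve $t \mapsto \varphi_{t}(Z)$ then solves
$$\frac{d}{dt}\varphi_{t}(Z) = {\text{ad}}\big(\nabla F(\varphi_{t}(Z))\big)\varphi_{t}(Z) = \big[\, P(\varphi_{t}(Z)),\, \varphi_{t}(Z) \,\big].$$
Because $L$ is the identity on points of the orbit, composing $L$ with the flow yields $L(\varphi_{t}(Z)) = \varphi_{t}(Z)$, and differentiating gives
$$\frac{d}{dt}L(\varphi_{t}(Z)) = \big[\, P(\varphi_{t}(Z)),\, L(\varphi_{t}(Z)) \,\big] = -\big[\, L(\varphi_{t}(Z)),\, P(\varphi_{t}(Z)) \,\big],$$
which is exactly the zero curvature equation \ref{EulerLax}. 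Since $F$ was an arbitrary smooth function on $\mathfrak{g}^{\ast}$, this covers every collective Hamiltonian, establishing the proposition.

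The points requiring care here are bookkeeping rather than genuine obstacles. I would verify that the tangent vector ${\text{ad}}(X)Y \in T_{Y}O(\Lambda)$ is carried by $(D\Phi)_{Y}$ to ${\text{ad}}(X)\Phi(Y)$, which is immediate because $\Phi$ is the inclusion and hence $(D\Phi)_{Y}$ is the canonical injection $T_{Y}O(\Lambda) \hookrightarrow \mathfrak{g}$; I would also pin down the commutator sign so that the $P$ arising in $X_{H} = {\text{ad}}(\nabla F \circ \Phi)$ enters \ref{EulerLax} in the correct slot, as displayed above. No non-trivial analytic input is needed: the equivariance and Lie--Poisson structure of the orbit, together with the tautological nature of the moment map, force the dynamics into Lax form automatically. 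I expect the only delicate step to be keeping the two identifications --- $\mathfrak{g}^{\ast} \cong \mathfrak{g}$ and $O(\lambda) \cong O(\Lambda)$ --- mutually consistent so that $\nabla F$ genuinely lands in $\mathfrak{g}$ and the bracket is the honest Lie bracket.
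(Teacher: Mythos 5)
Your proposal is correct and follows essentially the same route as the paper: the paper also takes $L = \Phi$ and $P = \nabla F \circ \Phi$, uses $X_{\Phi^{\ast}(F)} = \operatorname{ad}(\nabla F\circ\Phi)$ together with the fact that $\Phi$ is the inclusion, and reads off $\frac{d}{dt}L = [P,L]$ along the flow. The bookkeeping points you flag (the differential of the inclusion and the commutator sign) are exactly the ones the paper handles in the preceding paragraphs.
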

\begin{proof}
Let $\Phi^{\ast}(F) \in C^{\infty}(O(\Lambda))$ be a collective Hamiltonian associated to some $F \in C^{\infty}(O(\Lambda))$. From the Hamiltonian flow of $X_{\Phi^{\ast}}(F) \in \mathfrak{X}(O(\Lambda))$, we have the following O.D.E.
\begin{equation}
\label{ODE}
\displaystyle \frac{d}{dt}\varphi_{t}(Z) =  X_{\Phi^{\ast}(F)}(\varphi_{t}(Z)) = {\text{ad}}(\nabla F(\Phi(\varphi_{t}(Z))))\varphi_{t}(Z),
\end{equation}
for every $Z \in O(\Lambda)$. We define the following pair of Lie algebra-valued functions
\begin{center}
$L  \colon Z \in O(\Lambda) \mapsto \Phi(Z) \in \mathfrak{g},$ \ \ and \ \  
$P \colon Z \in O(\Lambda) \mapsto \nabla F(\Phi(Z)) \in \mathfrak{g}.$
\end{center}
From the previous comments, we obtain 
\begin{align}
\displaystyle \frac{d}{dt}L(\varphi_{t}(Z)) = \displaystyle \frac{d}{dt}\Phi(\varphi_{t}(Z)) = \big [\nabla F(\Phi(\varphi_{t}(Z))), \Phi(\varphi_{t}(Z)) \big ].
\end{align}
Hence, from the definition of $L$ and $P$ we have that Eq. \ref{ODE} is exactly the equation $\frac{d}{dt}L + [L,P] = 0$.
\end{proof}

\begin{remark}
Notice that, since the adjoint action of a compact Lie group $G$ on its Lie algebra is a proper action, we have that $O(\Lambda) \subset \mathfrak{g}$ is a compact embedded submanifold of $\mathfrak{g}$. In particular, it follows that $C^{\infty}(O(\Lambda)) = \Phi^{\ast}(C^{\infty}(\mathfrak{g}))$, see for instance \cite[p. 29]{WARNER}. Thus, given $\psi \in C^{\infty}(O(\Lambda))$, we have some $I \in C^{\infty}(\mathfrak{g})$, such that $\psi = \Phi^{\ast}(I)$. Therefore, given a Hamiltonian system $(O(\Lambda),\omega_{O(\Lambda)},\Phi^{\ast}(F))$, for some $F \in C^{\infty}(O(\Lambda))$, a straightforward computation shows that
\begin{align}
\displaystyle\frac{d}{dt}\psi(\varphi_{t}(Z)) =  \big (\nabla I (L(\varphi_{t}(Z))), \frac{d}{dt}L(\varphi_{t}(Z)) \big ),
\end{align}
for every $Z \in O(\Lambda)$, where $L = \Phi$ and $P = \nabla F(\Phi)$. Hence, the action of the Hamiltonian vector field $X_{\Phi^{\ast}(F)} \in \mathfrak{X}(O(\Lambda))$ on every $\psi \in C^{\infty}(O(\Lambda))$ is completely determined by the Lax equation
$$\displaystyle \frac{d}{dt}L(\varphi_{t}(Z)) + \big [L(\varphi_{t}(Z)), P(\varphi_{t}(Z)) \big ] = 0.$$
\end{remark}

\begin{remark}[Euler's equation] It is worth pointing out that Eq.  \ref{EulerLax} is a generalization of the well-known Euler's equation of rotation of a rigid body about a fixed point \cite{Euler}. Rotation of the $n$-dimensional rigid body is also described by this type of equation as was first shown by Arnold, see for instance \cite{Arnold1}, \cite{Arnold2}.

\end{remark}



\subsection{Thimm's trick and spectral invariants}


In what follows, we investigate the relationship between Thimm's trick and the Lax equation \ref{EulerLax}. Given a Hamiltonian $G$-space $(O(\Lambda),\omega_{O(\Lambda)},G,\Phi)$, let $K \subset G$ be a closed connected Lie subgroup of $G$. By restriction, one can consider the Hamiltonian $K$-space $(O(\Lambda),\omega_{O(\Lambda)},K,\Phi_{K})$, where
$$\Phi_{K} \colon O(\Lambda) \to \mathfrak{k}, \ \ \ \mbox{  such that  } \ \ \ \Phi_{K} = \pi_{K} \circ \Phi.$$
Here we denote by $\pi_{K} \colon \mathfrak{g} \to \mathfrak{k}$ the orthogonal projection map. By taking $F \in C^{\infty}(\mathfrak{k})$, we consider the collective Hamiltonian $\Phi_{K}^{\ast}(F) \in C^{\infty}(O(\Lambda))$. Note that 
$$\Phi_{K}(F) = \Phi^{\ast}(F \circ \pi_{K}).$$
From this, we denote $\widetilde{F} = F \circ \pi_{K}$ and consider $\Phi_{K}^{\ast}(F) = \Phi^{\ast}(\widetilde{F})$ also as a collective Hamiltonian associated to the Hamiltonian $G$-space $(O(\Lambda),\omega_{O(\Lambda)},G,\Phi)$. As we have seen in the previous section, the dynamic associated to $\Phi^{\ast}(\widetilde{F})$ is completely determined by the Lax equation 
$$\displaystyle \frac{d}{dt}L(\varphi_{t}(Z)) + \big [L(\varphi_{t}(Z)), P(\varphi_{t}(Z)) \big ] = 0,$$
for every $Z \in O(\Lambda)$, where $L = \Phi$ and $P = \nabla \widetilde{F}(\Phi)$. Now we consider the following result.

\begin{lemma}
\label{C5S5.2L5.2.1}
In the setting above, we have $(\nabla F) \circ \pi_{K} = \nabla (F \circ \pi_{K})$, $\forall F \in  C^{\infty}(\mathfrak{k})$.
\end{lemma}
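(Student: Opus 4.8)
The plan is to unwind both gradients directly from their defining pairing with the $\text{Ad}$-invariant inner product $(\cdot,\cdot)$ that we used to identify $\mathfrak{g} \cong \mathfrak{g}^{\ast}$, and then to exploit the two structural properties of $\pi_{K}$: that it is $\mathbb{R}$-linear, and that, being the orthogonal projection onto $\mathfrak{k}$, it is self-adjoint with respect to $(\cdot,\cdot)$.

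First I would fix $Z \in \mathfrak{g}$ and an arbitrary test vector $\eta \in \mathfrak{g} \cong T_{Z}\mathfrak{g}$, and compute $d(F \circ \pi_{K})_{Z}(\eta)$ by the chain rule. Since $\pi_{K}$ is linear, its differential at every point is $\pi_{K}$ itself, so $d(F \circ \pi_{K})_{Z}(\eta) = (dF)_{\pi_{K}(Z)}(\pi_{K}(\eta))$. Rewriting both sides through the inner-product pairing that defines $\nabla$ yields $(\eta, \nabla(F \circ \pi_{K})(Z)) = (\pi_{K}(\eta), \nabla F(\pi_{K}(Z)))$, where on the right the gradient of $F$ is taken with respect to the restricted inner product $(\cdot,\cdot)|_{\mathfrak{k}}$.

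Next I would use that $\nabla F(\pi_{K}(Z)) \in \mathfrak{k}$ together with the orthogonal-projection identity $(\pi_{K}(\eta), v) = (\eta, v)$, valid for every $v \in \mathfrak{k}$ and every $\eta \in \mathfrak{g}$ — this is exactly the self-adjointness of $\pi_{K}$, equivalently the fact that $\eta - \pi_{K}(\eta) \perp \mathfrak{k}$. This collapses the right-hand side to $(\eta, \nabla F(\pi_{K}(Z)))$, giving $(\eta, \nabla(F \circ \pi_{K})(Z)) = (\eta, \nabla F(\pi_{K}(Z)))$ for all $\eta \in \mathfrak{g}$. Finally, non-degeneracy of $(\cdot,\cdot)$ lets me cancel the test vector and conclude $\nabla(F \circ \pi_{K})(Z) = \nabla F(\pi_{K}(Z))$, which is the asserted identity $\nabla(F \circ \pi_{K}) = (\nabla F) \circ \pi_{K}$.

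There is no serious obstacle here; the only point requiring genuine care is the bookkeeping of the two gradient operators — $\nabla F$ is the gradient of a function on $\mathfrak{k}$ with respect to $(\cdot,\cdot)|_{\mathfrak{k}}$, whereas $\nabla(F \circ \pi_{K})$ is the gradient of a function on $\mathfrak{g}$ — together with the observation that these are compatible precisely because $\pi_{K}$ is the \emph{orthogonal} projection rather than an arbitrary linear splitting of $\mathfrak{g} = \mathfrak{k} \oplus \mathfrak{k}^{\perp}$. I would flag explicitly that the self-adjointness of $\pi_{K}$ is what makes the final cancellation work, so that the $\text{Ad}$-invariant inner product fixed in Remark \ref{Adjointorbit} is essential to the statement.
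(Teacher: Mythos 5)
Your proof is correct and follows essentially the same route as the paper's: compute $d(F\circ\pi_K)_Z$ by the chain rule using $(D\pi_K)_Z = \pi_K$, rewrite both sides via the inner-product pairing, and use orthogonality of $\pi_K$ to identify the two gradients. If anything, your explicit appeal to the self-adjointness of $\pi_K$ and the non-degeneracy of $(\cdot,\cdot)$ makes the final identification slightly cleaner than the paper's version.
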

\begin{proof}
It follows directly from the fact that $(D\pi_{K})_{Z} = \pi_{K}$, for all $Z \in \mathfrak{g}$. 
\end{proof}

From the result above, we see that for the Lax pair $L = \Phi$ and $P = \nabla \widetilde{F}(\Phi)$ associated to $\widetilde{F} = F \circ \pi_{K}$, since $\Phi_{K} = \pi_{K} \circ \Phi$, we have
$$P = (\nabla \widetilde{F}) \circ \Phi = (\nabla F) \circ \Phi_{K} \Longrightarrow  X_{\Phi^{\ast}(\widetilde{F})} = X_{\Phi_{K}^{\ast}(F)}.$$
Furthermore, we have the following equation
\begin{equation}
\label{laxrestriction}
\displaystyle \frac{d}{dt}\Phi_{K}(\varphi_{t}(Z)) = \big [\nabla F(\Phi_{K}(\varphi_{t}(Z))), \Phi_{K}(\varphi_{t}(Z)) \big ],
\end{equation}
where $\varphi_{t}(Z)$ is the Hamiltonian flow of $X_{\Phi^{\ast}(\widetilde{F})}$. In fact, if we denote $W = \varphi_{t}(Z) \in O(\Lambda)$ and $Y = \nabla F(\Phi_{K}(\varphi_{t}(Z))) \in \mathfrak{k}$, we have
\begin{equation}\label{eq-sec4-1}
\displaystyle \frac{d}{dt}\Phi_{K}(\varphi_{t}(Z)) = (D\Phi_{K})_{W}({\text{ad}}(Y)W) = {\text{ad}}(Y)\Phi_{K}(W).
\end{equation}
Note that the equality on the right-hand side of Eq. \ref{eq-sec4-1} follows from the fact that $\Phi_{K}$ is equivariant and $Y = \nabla F(\Phi_{K}(\varphi_{t}(Z))) \in \mathfrak{k}$. Now if we take $I \in C^{\infty}(\mathfrak{k})$ and consider the collective Hamiltonian 
$$ \psi = \Phi_{K}^{\ast}(I) \in C^{\infty}(O(\Lambda)),$$
from the equation of motion associated to the Hamiltonian system $(O(\Lambda),\omega_{O(\Lambda)},\Phi_{K}^{\ast}(F))$ we have
\begin{equation}
\label{eq-sec4-2}
\displaystyle \frac{d}{dt}\psi(\varphi_{t}(Z)) = \big \{ \Phi_{K}^{\ast}(F), \Phi_{K}^{\ast}(I)\big \}_{O(\Lambda)}(\varphi_{t}(Z)).
\end{equation}
The left-hand side of the above equation can be written as 
\begin{equation}
\label{Laxeqofmotion}
\displaystyle \frac{d}{dt}I(\Phi_{K}(\varphi_{t}(Z))) = (dI)_{\Phi_{K}(\varphi_{t}(Z))} \big (\displaystyle \frac{d}{dt}\Phi_{K}(\varphi_{t}(Z)) \big ) = \big ( \nabla I(\Phi_{K}(\varphi_{t}(Z))), \frac{d}{dt}\Phi_{K}(\varphi_{t}(Z))\big ).
\end{equation}
From this, we obtain the following theorem:
\begin{theorem}\label{prop43}
Given a Hamiltonian $G$-space $(O(\Lambda),\omega_{O(\Lambda)},G,\Phi)$, and given a closed and connected Lie subgroup $K \subset G$, for every $F \in C^{\infty}(\mathfrak{k})$, the Hamiltonian system $(O(\Lambda),\omega_{O(\Lambda)},\Phi_{K}^{\ast}(F))$ is completely determined by the Lax equation 
\begin{equation}
\frac{d}{dt}L_{K} + \big [L_{K}, P_{K}\big ] = 0,
\end{equation}
where $L_{K} = \Phi_{K}$, and $P_{K} = \nabla F(\Phi_{K})$. Moreover, for every finite dimensional unitary representation $\varrho \colon K \to {\rm{GL}}(V)$, the following facts hold:
\begin{enumerate}
\item The spectral equation
\begin{equation}
\label{spectral}
\det\big (w \mathds{1}_{V} - \varrho_{\ast}(L_{K})\big) = 0,
\end{equation}
is preserved by the Hamiltonian flow of $\Phi_{K}^{\ast}(F)$ through every $Z \in O(\Lambda)$;

\item The imaginary part of all eigenvalues of $ \varrho_{\ast}(L_{K})$ are conserved quantities in involution for the Hamiltonian system $(O(\Lambda),\omega_{O(\Lambda)},\Phi_{K}^{\ast}(F))$ on an open dense subset $\mathcal{U}_{K} \subset O(\Lambda)$.

\end{enumerate}

\end{theorem}
\begin{proof}
Consider $L_{K} = \Phi_{K}$ and $P_{K} = \nabla F(\Phi_{K})$. The first assertion follows directly from Lemma \ref{C5S5.2L5.2.1}, Eq. \ref{laxrestriction}, and Eq. \ref{Laxeqofmotion}. For the item 1, fix a maximal torus $T \subset K$, with Lie algebra $\mathfrak{t}$, take a closed positive Weyl chamber $\mathfrak{t}_{+} \subset \mathfrak{t}$, and consider the sweeping map $s_{K} \colon \mathfrak{k} \to \mathfrak{t}_{+}$, which is defined by letting $s_{K}(X)$ be the unique element of the set $({\text{Ad}}(K)X) \cap \mathfrak{t}_{+}$, $\forall X \in \mathfrak{k}$. Notice that $s_{K}$ is not smooth everywhere on $\mathfrak{k}$. From this, given $Z \in O(\Lambda)$, it follows that
\begin{center}
$L_{K}(Z) = {\text{Ad}}(k)s_{K}(L_{K}(Z))$, 
\end{center}
for some $k \in K$. Hence, given a finite dimensional unitary representation $\varrho \colon K \to {\rm{GL}}(V)$, by considering the corresponding induced Lie algebra representation $\varrho_{\ast} \colon \mathfrak{k} \to \mathfrak{gl}(V)$, since $K = \exp(\mathfrak{k})$, we obtain
\begin{center}
${\rm{Tr}}\big ([\varrho_{\ast}(L_{K}(Z))]^{m}\big ) = {\rm{Tr}}\big ( {\text{Ad}}(\varrho(k))[\varrho_{\ast}(s_{K}(L_{K}(Z)))]^{m}\big ) = {\rm{Tr}}\big ([\varrho_{\ast}(s_{K}(L_{K}(Z)))]^{m}\big )$,
\end{center}
for all $Z \in O(\Lambda)$ and for every $m \in \mathbb{N}$. By considering the Hamiltonian flow $\varphi_{t}(Z)$ of $\Phi_{K}^{\ast}(F)$ through some $Z \in O(\Lambda)$, it follows that
\begin{equation}
\label{flow}
\varphi_{t}(Z) = {\text{Ad}}(g(t))Z, \ \ \ (\forall t \in \mathbb{R})
\end{equation}
where $g \colon \mathbb{R} \to K$ is a solution of the initial value problem
$$\frac{dg}{dt} = v_{g(t)}, \ \ \mbox{   with   } \ \ g(0) = e,$$
for the vector field $v \in \mathfrak{X}(K)$, such that $v_{g} = (R_{g})_{\ast}(P_{K}({\text{Ad}}(g)Z))$, where $R_{g} \colon K \to K$ denotes the right translation, see for instance \cite[p. 241-242]{COL}. From this, since $L_{K}$ is equivariant and $s_{K}$ is ${\text{Ad}}$-invariant, given $Z \in O(\Lambda)$, it follows that 
\begin{center}
${\rm{Tr}}\big ([\varrho_{\ast}(s_{K}(L_{K}(\varphi_{t}(Z)))]^{m}\big) = {\rm{Tr}}\big ([\varrho_{\ast}(s_{K}(L_{K}(Z)))]^{m}\big ) \Longrightarrow \displaystyle \frac{d}{dt}\big \{{\rm{Tr}}\big ([\varrho_{\ast}(L_{K}(\varphi_{t}(Z)))]^{m}\big )\big\} = 0$.
\end{center}
Thus, once the coefficients of the characteristic polynomial $\det\big ( w \mathds{1}_{V} - \varrho_{\ast}(L_{K}(Z))\big)$ are given in terms of functions of the form ${\rm{Tr}}\big ([\varrho_{\ast}(L_{K}(Z))]^{m}\big )$, $m \in \mathbb{N}$, we obtain the item 1.

For item 2, we observe that associated to the Hamiltonian $K$-space $(O(\Lambda),\omega_{O(\Lambda)},K,\Phi_{K})$ we have the principal stratum $ \sigma_{K} \subset \mathfrak{t}_{+}$, which satisfies the property that 
\begin{center}
$\Phi_{K}(O(\Lambda)) \cap  \mathfrak{t}_{+} \subset \overline{\sigma_{K}}$ \ \ \ \ and \ \ \ $\Phi_{K}(O(\Lambda)) \cap \sigma_{K} \neq \emptyset$, 
\end{center}
see for instance \cite[Theorem 3.1]{Lerman}. Moreover, we have that $\Phi_{K}^{-1}(\Sigma_{\sigma_{K}})  \subset O(\Lambda)$, where $\Sigma_{\sigma_{K}} = {\text{Ad}}(K)\sigma_{K}$, defines an ${\rm{Ad}}$-invariant connected open dense subset in $O(\Lambda)$, see for instance \cite[Proposition 1]{Lane}. From these, we can set $\mathcal{U}_{K} = \Phi_{K}^{-1}(\Sigma_{\sigma_{K}})$. Since $s_{K} \colon \Sigma_{\sigma_{K}} \to \sigma_{K}$ is smooth, by choosing a basis $\{\xi_{1}, \ldots, \xi_{r_{K}}\}$ for $\mathfrak{t}$, where $r_{K} = {\text{rank}}(K)$, and considering the induced dual basis $\{ \phi_{1}, \ldots, \phi_{r_{K}}\} $, it follows that 
\begin{equation}
s_{K} \circ L_{K} = (\phi_{1} \circ s_{K} \circ L_{K})\xi_{1} + \cdots + (\phi_{r_{K}} \circ s_{K} \circ L_{K})\xi_{r_{K}},
\end{equation}
Since $L_{K}$ is equivariant and $s_{K}$ is ${\text{Ad}}$-invariant, it follows from the definition of $\mathcal{U}_{K}$ that $\phi_{i} \circ s_{K} \circ L_{K} \in C^{\infty}(\mathcal{U}_{K})^{{\rm{Ad}}}$, for all $i=1,\dots,r_{K}$. Hence, from the definition of the Hamiltonian flow of $\Phi_{K}^{\ast}(F)$ (see Eq. \ref{flow}) we have that $\phi_{1} \circ s_{K} \circ L_{K}, \ldots, \phi_{r_{K}} \circ s_{K} \circ L_{K}$ are conserved quantities in involution for the Hamiltonian system defined by $(\mathcal{U}_{K},\omega_{O(\Lambda)}|_{\mathcal{U}_{K}},\Phi_{K}^{\ast}(F)|_{\mathcal{U}_{K}})$. Therefore, since $\varrho_{\ast}(\xi_{1}), \ldots, \varrho_{\ast}(\xi_{r_{K}})$ are diagonal matrices and 
\begin{center}
$\det\big (w \mathds{1}_{V} - \varrho_{\ast}(L_{K})\big) = \det\big (w \mathds{1}_{V} - \varrho_{\ast}(s_{K} \circ L_{K})\big),$
\end{center}
on $\mathcal{U}_{K}$, we conclude that the imaginary part of the eigenvalues of $ \varrho_{\ast}(L_{K})$ are conserved quantities in involution for the Hamiltonian system $(\mathcal{U}_{K},\omega_{O(\Lambda)}|_{\mathcal{U}_{K}},\Phi_{K}^{\ast}(F)|_{\mathcal{U}_{K}})$.
\end{proof}

\begin{remark} 
\label{spectrum}
Notice that, in the setting of the previous theorem, fixing some $K$-invariant Hermitian product on $V$, $\forall Z \in O(\Lambda)$ we have that $\varrho_{\ast}(L_{K}(Z))$ is a skew-Hermitian linear operator, so its eigenvalues are all purely imaginary (and possibly zero).
\end{remark}

In what follows, given a compact Lie group $G$, we will denote by $\widehat{G}$ the set of equivalence classes of irreducible (unitary) representations of $G$. Thus, given an irreducible representation $\varrho \colon G \to {\rm{GL}}(V)$, we shall denote by $[\varrho] \in \widehat{G}$ its corresponding class.

\begin{corollary}
\label{C5S5.2Teo5.2.4}
Given a Hamiltonian $G$-space $(O(\Lambda),\omega_{O(\Lambda)},G,\Phi)$, and a chain of closed connected Lie subgroups
$$G = K_{0} \supset K_{1} \supset \cdots \supset K_{s}.$$
If we denote by $\Phi_{i}$ the moment map associated to the Hamiltonian action (by restriction) of each Lie subgroup $K_{i}$ on $O(\Lambda)$, $i = 0,1,\ldots s$, then for every $F \in C^{\infty}(\mathfrak{k}_{s})$, the following hold:

\begin{enumerate}

\item We can associate to the Hamiltonian system $(O(\Lambda),\omega_{O(\Lambda)},\Phi_{s}^{\ast}(F))$ a system of Lax equations
\begin{center}
$\displaystyle \frac{d}{dt}L_{i} + \big [L_{i},P_{i}\big ] = 0$, \ \ \ \ $(i = 1,\ldots,s)$
\end{center}
such that $L_{i} = \Phi_{i}$, and $P_{i} = \nabla (F \circ \pi_{s}^{i})(\Phi_{k})$, where $\pi_{s}^{i} \colon \mathfrak{k}_{i} \to \mathfrak{k}_{s}$ is the projection map;

\item For every $s$-tuple $\varrho = (\varrho_{1},\ldots,\varrho_{s})$, where $[\varrho_{i}] \in \widehat{K_{i}}$, $\forall i = 1,\ldots, s$, there exist a pair of matrix-valued functions $L_{\varrho},P_{\varrho} \colon O(\Lambda) \to \mathfrak{gl}(r,\mathbb{R})$, satisfying the Lax equation
\begin{equation}
\label{laxrep}
\displaystyle \frac{d}{dt}L_{\varrho} + \big [L_{\varrho},P_{\varrho} \big] = 0.
\end{equation}
Moreover, the spectrum $\sigma(L_{\varrho})$ of $L_{\varrho}$ defines a set of conserved quantities in involution for the Hamiltonian system $(O(\Lambda), \omega_{O(\Lambda)},\Phi_{s}^{\ast}(F))$ on an open dense subset $\mathcal{U}_{\Lambda} \subset O(\Lambda)$.
\end{enumerate}
\end{corollary}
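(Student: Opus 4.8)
The plan is to bootstrap Theorem~\ref{prop43} along the chain, using Proposition~\ref{ThimmThimm} to control the involution. For part~(1) the key point is that the single collective Hamiltonian $\Phi_s^*(F)$ can be read off at every level of the chain. Since the $K_i$ are nested and the projections are orthogonal with respect to a fixed ${\text{Ad}}$-invariant inner product, they compose, $\pi_s^0 = \pi_s^i \circ \pi_i^0$ (orthogonal projection onto $\mathfrak{k}_s$ factors through any intermediate $\mathfrak{k}_i \supseteq \mathfrak{k}_s$), so that $\Phi_s = \pi_s^i \circ \Phi_i$ and hence
\[
\Phi_s^*(F) = \Phi_i^*\bigl(F \circ \pi_s^i\bigr), \qquad i = 1, \dots, s.
\]
I would then apply Theorem~\ref{prop43} to the subgroup $K_i \subset G$ together with the function $F \circ \pi_s^i \in C^\infty(\mathfrak{k}_i)$ and Lemma~\ref{C5S5.2L5.2.1}, obtaining $\frac{d}{dt} L_i + [L_i, P_i] = 0$ with $L_i = \Phi_i$ and $P_i = \nabla(F \circ \pi_s^i)(\Phi_i)$. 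Because all $s$ systems share the same Hamiltonian, and therefore the same flow $\varphi_t$, these Lax equations hold simultaneously along $\varphi_t$, which is exactly the system claimed.

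For part~(2) I would assemble the individual $L_i$ into a single matrix by the block sum through the representations. Writing $(\varrho_i)_*\colon \mathfrak{k}_i \to \mathfrak{gl}(V_i)$ for the induced Lie algebra representations and realifying each $\mathfrak{gl}(V_i) \hookrightarrow \mathfrak{gl}(r_i,\mathbb{R})$ (so that the skew-Hermitian operators become real matrices and $r = \sum_i r_i$), set
\[
L_\varrho = \bigoplus_{i=1}^{s} (\varrho_i)_*(L_i), \qquad P_\varrho = \bigoplus_{i=1}^{s} (\varrho_i)_*(P_i).
\]
Since each $(\varrho_i)_*$ (and the realification) is a Lie algebra homomorphism, applying it to the $i$-th Lax equation of part~(1) produces a Lax equation for the $i$-th block; and because matrices supported on distinct blocks commute, $[L_\varrho, P_\varrho]$ is block-diagonal, so the block sums satisfy Eq.~\ref{laxrep}.

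It then remains to show that $\sigma(L_\varrho)$ gives functions in involution on a suitable open dense set. I would take $\mathcal{U}_\Lambda = \bigcap_{i=1}^s \Phi_i^{-1}(\Sigma_{\sigma_i})$, which is connected, open and dense by~\ref{openAAcoord}. The eigenvalues of $L_\varrho$ are the eigenvalues of the blocks $(\varrho_i)_*(\Phi_i)$, and by Theorem~\ref{prop43} and Remark~\ref{spectrum} these are conserved along $\varphi_t$ and restrict to smooth functions on $\mathcal{U}_\Lambda$ via the sweeping maps $s_{K_i}$. For involution, each such block spectral invariant is a collective Hamiltonian $\Phi_i^*(C_i)$ with $C_i(X) = {\rm{Tr}}\bigl((\varrho_i)_*(X)^m\bigr)$, and such a $C_i$ is ${\text{Ad}}(K_i)$-invariant, hence a Casimir of $\mathfrak{k}_i$. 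For $i \le j$ I would rewrite $\Phi_j^*(C_j) = \Phi_i^*(C_j \circ \pi_j^i)$ with $\pi_j^i\colon \mathfrak{k}_i \to \mathfrak{k}_j$ the projection, and use that $\Phi_i$ is a Poisson map to get
\[
\bigl\{\Phi_i^*(C_i), \Phi_j^*(C_j)\bigr\}_{O(\Lambda)} = \Phi_i^*\bigl(\{C_i, C_j \circ \pi_j^i\}_{\mathfrak{k}_i}\bigr) = 0,
\]
the last equality holding because $C_i$ is a Casimir of $\mathfrak{k}_i$. This is precisely Proposition~\ref{ThimmThimm} read along the chain.

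The routine parts are the block-diagonal computation and the realification bookkeeping. The main obstacle is the interplay between regularity and involution: the conserved quantities that are globally smooth and manifestly in involution are the characteristic-polynomial coefficients, i.e. the trace invariants $C_i \circ \Phi_i$, whereas the individual eigenvalues are only recovered as smooth functions after passing to the principal-stratum locus of Remark~\ref{principalstratum}. The care needed is therefore to identify $\mathcal{U}_\Lambda$ as the common locus where all the sweeping maps $s_{K_i}$ are smooth, to verify that this finite intersection is still connected and dense, and to match the eigenvalue functions on $\mathcal{U}_\Lambda$ with the globally defined Casimir collective Hamiltonians whose involution is already established.
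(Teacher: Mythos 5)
Your proposal follows essentially the same route as the paper: part (1) by applying Theorem~\ref{prop43} to each $K_i$ with the function $F\circ\pi_s^i$, and part (2) by taking block direct sums $L_\varrho=\bigoplus_i(\varrho_i)_*(L_i)$, $P_\varrho=\bigoplus_i(\varrho_i)_*(P_i)$ and intersecting the open dense sets $\mathcal{U}_{K_i}=\Phi_i^{-1}(\Sigma_{\sigma_i})$. Your explicit verification of the mutual involution via Proposition~\ref{ThimmThimm} (writing the spectral invariants as collective Hamiltonians of ${\rm Ad}$-invariant functions and using that $\Phi_i$ is a Poisson map) is a welcome elaboration of a step the paper only cites implicitly, but it is not a different argument.
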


\begin{proof}
The first item is a direct consequence of the last theorem, we just need to consider $K_{i} \subset G$ and $F \circ \pi_{s}^{i} \in C^{\infty}(\mathfrak{k}_{i})$, $i = 1,\ldots,s$, for every $F \in C^{\infty}(\mathfrak{k}_{s})$. In this setting, applying the last theorem we get 
$$\displaystyle \frac{d}{dt}L_{i} + \big [L_{i},P_{i} \big] = 0,$$
where $L_{i} = \Phi_{i}$ and $P_{i} = \nabla (F \circ \pi_{s}^{i})(\Phi_{i})$, $\forall i = 1,\ldots,s$. For the second item, given  $\varrho = (\varrho_{1},\ldots,\varrho_{s})$, where $[\varrho_{i}] \in \widehat{K_{i}}$, $\forall i = 1,\ldots, s$, we set
\begin{equation}
L_{\varrho} := \displaystyle \bigoplus_{i=1}^{s}(\varrho_{i})_{\ast}(L_{i}),  \ \ \mbox{  and  }  \ \ P_{\varrho} := \displaystyle \bigoplus_{i=1}^{s}(\varrho_{i})_{\ast}(P_{i}).
\end{equation}
From this, since $(\varrho_{i})_{\ast}$ is a Lie algebra homomorphism, $\forall i = 1,\ldots,s$, we have
\begin{center}
$\Big [L_{\varrho}(\varphi_{t}(Z)), P_{\varrho}(\varphi_{t}(Z)) \Big ] = \displaystyle \bigoplus_{i=1}^{s} \Big [ (\varrho_{i})_{\ast}(L_{i}(\varphi_{t}(Z))),(\varrho_{i})_{\ast}(P_{i}(\varphi_{t}(Z)))\Big ] = \bigoplus_{i=1}^{s}(\varrho_{i})_{\ast}(-\frac{d}{dt}L_{i}(\varphi_{t}(Z)))$.
\end{center}
for all $Z \in O(\Lambda)$, where $\varphi_{t}(Z)$ denotes the Hamiltonian flow of $\Phi_{s}^{\ast}(F)$. Therefore, by linearity of $(\varrho_{i})_{\ast}$, $i = 1,\ldots,s$, we have 
\begin{center}
$\displaystyle \frac{d}{dt}L_{\varrho}(\varphi_{t}(Z)) + \Big [L_{\varrho}(\varphi_{t}(Z)), P_{\varrho}(\varphi_{t}(Z)) \Big ] = 0.$
\end{center}
By taking a suitable $r \in \mathbb{N}$ (large enough), we have $L_{\varrho},P_{\varrho} \colon O(\Lambda) \to \mathfrak{gl}(r,\mathbb{R})$. For the last statement of item 2, we consider $\mathcal{U}_{\Lambda} = \bigcap_{i = 1}^{s}\mathcal{U}_{K_{i}}$, where $\mathcal{U}_{K_{i}} \subset O(\Lambda)$ is obtained from Theorem \ref{prop43}, $\forall i = 1,\ldots,s$. Since the spectrum $\sigma(L_{\varrho})$ of $L_{\varrho}$ is determined by the spectrum of each $(\varrho_{i})_{\ast}(L_{i})$, $i = 1,\ldots,s$, we have from Remark \ref{spectrum}, and  Theorem \ref{prop43}, that $\sigma(L_{\varrho})$ defines a set of conserved quantities in involution for the Hamiltonian system given by $(\mathcal{U}_{\Lambda}, \omega_{O(\Lambda)}|_{\mathcal{U}_{\Lambda}},\Phi_{s}^{\ast}(F)|_{\mathcal{U}_{\Lambda}})$.
\end{proof}
\begin{definition} 
\label{GTPoisson}
Under the hypothesis of Corollary \ref{C5S5.2Teo5.2.4}, we define the {\em Gelfand-Tsetlin commutative Poisson subalgebra} $\Gamma_{O(\Lambda)} \subset C^{\infty}(O(\Lambda))$ as being
\begin{equation}
\Gamma_{O(\Lambda)} := \Big \langle \Phi_{k}^{\ast}\big(\mathcal{S}(\mathfrak{k}_{k})^{{\text{Ad}}} \big) \ \ \Big | \ \ k = 1, \ldots,s \Big \rangle,
\end{equation}
where $\mathcal{S}(\mathfrak{k}_{k})^{{\text{Ad}}}$ denotes the subalgebra of ${\text{Ad}}$-invariant polynomial functions. 
\end{definition}
\begin{remark}
It is worth observing that the motivation for the definition above of Gelfand-Tsetlin Poisson subalgebra is the concept of the Gelfand-Tsetlin subalgebras of the universal enveloping algebras, which are examples of Harish-Chandra subalgebras \cite[p. 87]{FUTORNY2}, \cite{FUTORNY}, see also \cite{Panyushev}.
\end{remark}

\subsubsection{Proof of Theorem 1} From the ideas and results provided in the previous section, we have the following theorem:
\begin{theorem}\label{C5S5.2Teo5.2.10}
Given a Hamiltonian $G$-space $(O(\Lambda),\omega_{O(\Lambda)},G,\Phi)$, where $G$ is either ${\rm{U}}(n) \  \text{or} \ \ {\rm{SO}}(n)$, then there exist $H \in C^{\infty}(O(\Lambda))$, and a pair of matrix-valued functions $L,P \colon (O(\Lambda),\omega_{O(\Lambda)}) \to \mathfrak{gl}(r,\mathbb{R})$, satisfying the Lax equation
\begin{equation}
\displaystyle \frac{d}{dt}L(\varphi_{t}(Z)) + \big [L(\varphi_{t}(Z)), P(\varphi_{t}(Z)) \big ] = 0,
\end{equation}
for all $Z \in O(\Lambda)$, where $\varphi_{t}(Z)$ denotes the flow of the Hamiltonian vector field $X_{H}$ through $Z \in O(\Lambda)$. Moreover, the solutions of the spectral equation 
\begin{equation}
\det\big (w \mathds{1}_{r} - L\big) = 0, 
\end{equation}
provide a maximal set of conserved quantities in involution for the Hamiltonian system $(O(\Lambda),\omega_{O(\Lambda)},H)$ which coincides with the Gelfand-Tsetlin integrable system on $O(\Lambda)$.
\end{theorem}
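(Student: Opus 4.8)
The plan is to obtain the statement as a direct specialization of Corollary \ref{C5S5.2Teo5.2.4} to the canonical flag of subgroups underlying the Gelfand-Tsetlin construction, combined with an identification of the resulting spectral invariants with the eigenvalues of the leading principal submatrices, which are precisely the Gelfand-Tsetlin coordinates. First I would fix the nested chain. For $G = \mathrm{U}(n)$ I would take
\[
\mathrm{U}(n) \supset \mathrm{U}(n-1) \supset \cdots \supset \mathrm{U}(1),
\]
where $\mathrm{U}(k)$ is embedded as the upper-left block, so that the moment map $\Phi_i = \Phi_{\mathrm{U}(n-i)}$ sends $Z \in O(\Lambda) \subset \mathfrak{u}(n)$ to the leading $(n-i)\times(n-i)$ block of $Z$ in $\mathfrak{u}(n-i)$. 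For $G = \mathrm{SO}(n)$ I would take the analogous chain $\mathrm{SO}(n) \supset \cdots \supset \mathrm{SO}(2)$ with the upper-left block embeddings.

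Next, for each $K_i$ I would choose $\varrho_i$ to be the standard (defining) representation, so that $(\varrho_i)_*(L_i)$ is simply the leading block $L_i = \Phi_i$ acting in the defining representation. With these choices, Corollary \ref{C5S5.2Teo5.2.4} produces a single pair $L = L_\varrho = \bigoplus_i (\varrho_i)_*(L_i)$ and $P = P_\varrho = \bigoplus_i (\varrho_i)_*(P_i)$, valued (after realification and a suitable choice of $r$) in $\mathfrak{gl}(r,\mathbb{R})$, satisfying the Lax equation along the Hamiltonian flow of $H = \Phi_s^*(F)$ for any $F \in C^\infty(\mathfrak{k}_s)$. Since $L$ is block-diagonal, the spectral polynomial factors as
\[
\det\big(w\mathds{1}_r - L(Z)\big) = \prod_{i} \det\big(w\mathds{1} - (\varrho_i)_*(L_i(Z))\big),
\]
and the roots of the $i$-th factor are exactly the eigenvalues of the leading $(n-i)\times(n-i)$ block of $Z$.

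It then remains to identify these spectral invariants with the Gelfand-Tsetlin integrable system. By Remark \ref{spectrum}, the eigenvalues of each $(\varrho_i)_*(L_i(Z))$ are purely imaginary, and the associated conserved quantities $\sqrt{-1}\,\mu(s_i(L_i(Z)))$ coming from the sweeping construction of Remark \ref{principalstratum} are, in the defining representation, exactly the eigenvalues of the leading principal submatrices, i.e. the classical Gelfand-Tsetlin coordinates. Poisson-commutativity of the full collection follows from the iterated Thimm's trick (Proposition \ref{ThimmThimm}), as already recorded in Corollary \ref{C5S5.2Teo5.2.4}, while the fact that they form a maximal algebraically independent family on the open dense set $\mathcal{U}_\Lambda = \bigcap_i \mathcal{U}_{K_i}$ — hence a genuine integrable system — is the classical result of Guillemin and Sternberg for $\mathrm{U}(n)$ and $\mathrm{SO}(n)$, with the non-regular orbits covered as in \cite{Panyushev}. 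A dimension count confirms the claim: for a regular orbit the free eigenvalues range over block sizes $k = 1, \ldots, n-1$, giving $\sum_{k=1}^{n-1} k = \tfrac{1}{2}\dim_{\mathbb{R}} O(\Lambda)$ independent functions.

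The main obstacle I anticipate is not the Lax equation itself — that is handed to us by Corollary \ref{C5S5.2Teo5.2.4} — but the precise matching between the abstractly defined spectral invariants of the realified direct-sum matrix $L$ and the concrete Gelfand-Tsetlin coordinates, together with the functional independence on $\mathcal{U}_\Lambda$. Care is needed because realifying the defining representations doubles the spectrum into conjugate pairs $\pm\sqrt{-1}\,\lambda$, so one must check that no independent quantity is lost or spuriously created, and because the eigenvalue functions are smooth only on $\mathcal{U}_\Lambda$, where the interlacing inequalities defining the Gelfand-Tsetlin patterns are strict, whereas the characteristic-polynomial coefficients are smooth everywhere; reconciling these two descriptions on $\mathcal{U}_\Lambda$ is the delicate point. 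For $\mathrm{SO}(n)$ one must additionally track the parity distinction between $\mathrm{SO}(2m)$ and $\mathrm{SO}(2m+1)$, since the rank and the structure of the root system — and hence the number of action coordinates extracted at each level — depend on it.
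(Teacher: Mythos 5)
Your proposal is correct and follows essentially the same route as the paper's proof: the same nested chain with upper-left block embeddings, the same choice of defining representations fed into Corollary \ref{C5S5.2Teo5.2.4}, the same block-diagonal factorization of the characteristic polynomial, and the same appeal to the coisotropicity results of Guillemin--Sternberg and Panyushev--Yakimova for integrability. Your additional remarks on the realification of the spectrum, the dimension count, and the $\mathrm{SO}(2m)$ versus $\mathrm{SO}(2m+1)$ parity are sensible refinements of points the paper treats implicitly, but they do not change the argument.
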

\begin{proof}

Consider the Hamiltonian $G$-space $(O(\Lambda),\omega_{O(\Lambda)},G,\Phi)$ where $G$ is ${\rm{U}}(n) \  \text{or} \  {\rm{SO}}(n)$. In this setting, we have a chain of subgroups
\begin{center}
${\rm{U}}(n) \supset {\rm{U}}(n-1) \supset \cdots \supset {\rm{U}}(1)$ \ or  \ ${\rm{SO}}(n) \supset {\rm{SO}}(n-1) \supset \cdots \supset {\rm{SO}}(2)$,
\end{center}
where ${\rm{U}}(k)$ is regarded as a Lie subgroup of ${\rm{U}}(n)$ through the identification 
\begin{equation}
\label{blockmatrix}
{\rm{U}}(k) \cong \left (\begin{array}{c|c}
{\rm{U}}(k) \ \ & 0 \\
\midrule
0  & \ \mathds{1}_{n-k}\\
\end{array} \right ) \subset {\rm{U}}(n),
\end{equation}
$\forall k = 1,\ldots,n-1$, and ${\rm{SO}}(k)$ is similarly embedded in the top lefthand corner of ${\rm{SO}}(n)$. From this, by taking $\varrho_{0} = (\varrho_{1},\ldots,\varrho_{s})$, such that $\varrho_{i}$ is either the canonical representation
\begin{center}
$\varrho_{i} \colon {\rm{U}}(n-i) \to {\rm{GL}}(n-i,\mathbb{C})$ \ or \ $\varrho_{i} \colon {\rm{SO}}(n-i) \to {\rm{GL}}(n-i,\mathbb{C}),$
\end{center}
where $i = 1,\ldots,s$, with $s = n-1$ or $s = n-2$, we can apply Corollary \ref{C5S5.2Teo5.2.4}, and obtain $L,P \colon O(\Lambda) \to \mathfrak{u}(r)$, for some suitable $r \in \mathbb{N}$, such that $L = L_{\varrho_{0}}$, and $P = P_{\varrho_{0}}$, satisfying the Lax equation: 
\begin{center}
$\displaystyle \frac{d}{dt}L(\varphi_{t}(Z)) + \big [L(\varphi_{t}(Z)), P(\varphi_{t}(Z)) \big ] = 0,$
\end{center}
for all $Z \in O(\Lambda)$, where $\varphi_{t}(Z)$ denotes the Hamiltonian flow of $H = \Phi_{s}^{\ast}(F) \in C^{\infty}(O(\Lambda))$, for some $F \in C^{\infty}(\sqrt{-1}\mathbb{R})$, here we consider $\mathfrak{u}(1) \cong \sqrt{-1}\mathbb{R} \cong \mathfrak{so}(2)$. Now we observe that, given $Z \in O(\Lambda)$, since 
\begin{center}
$\displaystyle L(Z) = \bigoplus_{i = 1}^{s}L_{i}(Z) = {\text{diag}} \big \{L_{i}(Z) \ \big | \ i = 1,\ldots,s \big\},$
\end{center}
such that $L_{i} = \Phi_{n-i}$, $i = 1,\ldots,s$, with $s = n-1$ or $s = n-2$, it follows that 
\begin{equation}
\label{CharacpolyLax}
\displaystyle \det\big (w \mathds{1}_{r} -  L\big) = \prod_{i = 1}^{s}\det\big (w \mathds{1}_{n-i} - L_{i}\big).
\end{equation}
Therefore, the solutions of the spectral equation associated to $L \colon O(\Lambda) \to \mathfrak{u}(r)$ is determined by the solutions of the spectral equation associated to each $L_{i}$, $i = 1,\ldots,s$, with $s = n-1$ or $s = n-2$. From this last fact, we have that the spectrum $\sigma(L,\Lambda)$ of $L$ generates the Gelfand-Tsetlin commutative Poisson subalgebra $\Gamma_{O(\Lambda)}$, see Definition \ref{GTPoisson}. Hence, since the action of ${\rm{U}}(k)$ on adjoint orbits of ${\rm{U}}(k+1)$ is coisotropic, and the same holds for the action of  ${\rm{SO}}(k)$ on adjoint orbits of ${\rm{SO}}(k+1)$, see for instance \cite{Panyushev}, \cite{INTMULT}, we have that $\sigma(L,\Lambda)$ defines an integrable system on $O(\Lambda)$, which turns out to be the Gelfand-Tsetlin integrable system. 
\end{proof}

\begin{remark}
\label{opendensesetsubemrsion}
In the construction provided in the last theorem, the maximal subset of algebraically independent Poisson-commuting functions of $\sigma(L,\Lambda)$ can be obtained by looking at the Gelfand-Tsetlin patterns defined by $\Lambda$. Also, it is worth observing that the integrable system provided by $\sigma(L,\Lambda)$ defines a map $\mathscr{H} \colon (O(\Lambda),\omega_{O(\Lambda)}) \to \mathbb{R}^{\frac{\dim_{\mathbb{R}}(O(\Lambda))}{2}}$, which is a smooth submersion on an open dense subset $O(\Lambda)^{\mathscr{H}} := \mathcal{U}_{\Lambda} \subset O(\Lambda)$, see item 2 of Corollary \ref{C5S5.2Teo5.2.4}, and continuous on the whole orbit $O(\Lambda)$. In fact, the components of $\mathscr{H}$ are defined by Guillemin and Sternberg’s action coordinates (Eq. \ref{GSAction}).
\end{remark}

\begin{remark}
\label{RemarkGiacobbe}
The result of Theorem \ref{C5S5.2Teo5.2.10} generalizes the constriction introduced in \cite[Proposition 1.4]{Giacobbe} in the following sense: In the particular setting given by $(O(\Lambda),\omega_{O(\Lambda)},{\rm{U}}(n),\Phi)$, such that $O(\Lambda) \subset \mathfrak{u}(n)$ is some regular orbit, from the last theorem one can consider the Hamiltonian system 
\begin{center}
$\big (O(\Lambda),\omega_{O(\Lambda)},H = \pm (\sqrt{-1})^{k}{\text{Tr}}(L^{k})\big)$.
\end{center}
It follows trivially that $\{\sigma(L,\Lambda),H\}_{O(\Lambda)} = 0$, i.e., the Lax pair formulation of the last theorem gives rise to the $\frac{n(n-1)}{2}$
integrals of motion for the Hamiltonian system above. For the particular case that $k = 2$, the Hamiltonian system defined by $H = -{\text{Tr}}(L^{2})$ coincides with the Hamiltonian system studied in \cite[Proposition 1.4]{Giacobbe}. In order to recover the result \cite[Theorem 1.8]{Giacobbe} from Theorem \ref{C5S5.2Teo5.2.10}, we need to take a {\textit{parametric spectral deformation}} on $L$ given by
\begin{equation}
L(\zeta;Z) := \zeta \mathds{1}_{r} + L(Z), \ \ (Z \in O(\Lambda))
\end{equation}
for some complex parameter $\zeta \in \mathbb{C}$. By observing that the coefficients of the characteristic polynomial of $L(\zeta;Z)$ are determined by the coefficients of the characteristic polynomial of $L(Z)$, for all $Z \in O(\Lambda)$, see for instance \cite[Lemma 1.5]{Giacobbe}, we recover the desired result.
\end{remark}

\section{Gelfand-Tsetlin integrable systems and algebraic curves}
\label{GThyperelliptic}
The aim of this section is to prove Theorem \ref{Theo2}. In order to do this, we start by reviewing some results of \cite{Cho} concerning with the topology of the fibers of Gelfand-Tsetlin integrable systems on ${\rm{U}}(n)$-adjoint orbits. After that, following \cite{Kirwan}, \cite{Schlag}, \cite{Girondo}, \cite{Knapp}, \cite{Bosch}, \cite{Fischer}, we shall review some basic generalities on Abelian integrals, algebraic curves and Riemann surfaces. 

\subsection{Gelfand-Tsetlin integrable systems on ${\rm{U}}(n)$-adjoint orbits}
\label{GTU(n)}

From now on, for all $n \in \mathbb{N}$, we shall consider the standard maximal torus $T^{n} \subset {\rm{U}}(n)$, with Lie algebra $\mathfrak{t}_{n} \subset \mathfrak{u}(n)$ defined by the diagonal matrices. Moreover, in order to avoid problems with signs related to imaginary eigenvalues, we will identify $X \in \mathfrak{u}(n)$ with $-\sqrt{-1}X \in \sqrt{-1}\mathfrak{u}(n)$. Since under this last identification we have $\mathfrak{t}_{n} \cong \mathbb{R}^{n}$, we will consider the following realization for the closed positive Weyl chamber: 
\begin{center}
$(\mathfrak{t}_{n})_{+} = \Big \{ {\text{diag}}\big\{\lambda_{1},\ldots,\lambda_{n}\} \in \mathfrak{t}_{n} \ \Big | \ \lambda_{1} \geq \lambda_{2} \geq \cdots \geq \lambda_{n} \Big\}$.
\end{center}
Also, we shall fix the nested chain of subgroups given by
\begin{center}
${\rm{U}}(n) \supset {\rm{U}}(n-1) \supset \cdots \supset {\rm{U}}(1)$,
\end{center}
where ${\rm{U}}(k)$ is regarded as subgroup of ${\rm{U}}(n)$ as in Eq. \ref{blockmatrix}, for every $k = 1,\ldots,n-1$. We also shall consider the following data: In the setting of Theorem \ref{C5S5.2Teo5.2.10}, given an adjoint orbit $O(\Lambda) \subset \mathfrak{u}(n)$, we have $L \colon O(\Lambda) \to \mathfrak{u}(r)$, where $r = \frac{n(n-1)}{2}$, such that 
\begin{center}
$\displaystyle L(Z) = {\text{diag}} \big \{L_{i}(Z) \ \big | \ i = 1,\ldots,n-1 \big\},$
\end{center}
for all $Z \in O(\Lambda)$. Since $L_{i} \colon O(\Lambda) \to \mathfrak{u}(i)$, $i = 1,\ldots,n-1$, we have
\begin{equation}
\det\big (w \mathds{1}_{i} -  L_{i}(Z)\big) = \big (w - \lambda_{1}^{(i)}(Z)\big )  \cdots \big (w - \lambda_{i}^{(i)}(Z) \big ),
\end{equation}
for all $Z \in O(\Lambda)$, and for all $i = 1,\ldots,n-1$. From these, we have the spectrum $\sigma(L,\Lambda)$ of $L$ given by the functions
\begin{equation}
\label{Laxspectrum}
\sigma(L,\Lambda) = \big \{ \lambda_{j}^{(k)} \ \big | \ 1 \leq j \leq k, \  1 \leq k \leq n-1\big \}.
\end{equation}
Notice that from {\textit{Cauchy’s interlace theorem}} \cite{Hwang}, we have that
\begin{equation}
\lambda_{1}^{(k)} \geq \lambda_{1}^{(k-1)} \geq \lambda_{2}^{(k)} \geq \cdots \geq \lambda_{k-1}^{(k)} \geq \lambda_{k-1}^{(k-1)} \geq \lambda_{k}^{(k)},
\end{equation}
for all $ 1 \leq k \leq n-1$. The inequalities above define the Gelfand-Tsetlin patterns (Eq. \ref{GTpattern}). 
\begin{equation}
\label{GTpattern}
\begin{array}{*{30}c} 
\lambda_{1}&&&&\lambda_{2}&&&&\lambda_{3}&&\cdots&&\lambda_{n-1}&&&&\lambda_{n}\\  
&\rotatebox[origin=c]{-45}{$\geq$}&&\rotatebox[origin=c]{50}{$\geq$}&&\rotatebox[origin=c]{-45}{$\geq$}&&\rotatebox[origin=c]{50}{$\geq$}&&&&&&\rotatebox[origin=c]{-45}{$\geq$}&&\rotatebox[origin=c]{50}{$\geq$}\\ 
&&\lambda_{1}^{(n-1)}&&&&\lambda_{2}^{(n-1)}&&&&&&&&\lambda_{n-1}^{(n-1)}&&\\
&&&\rotatebox[origin=c]{-45}{$\geq$}&&\rotatebox[origin=c]{50}{$\geq$}&&&&&&&&\rotatebox[origin=c]{50}{$\geq$}&&\\
&&&&\lambda_{1}^{(n-2)}&&&&&&&&\lambda_{n-2}^{(n-2)}&&&&\\
&&&&&\rotatebox[origin=c]{-45}{$\geq$}&&&&&&\rotatebox[origin=c]{45}{$\geq$}&&&&&\\
&&&&&&\ddots&&&&\rotatebox[origin=c]{85}{$\ddots$}&&\\
&&&&&&&\rotatebox[origin=c]{-45}{$\geq$}&&\rotatebox[origin=c]{50}{$\geq$}&&&\\
&&&&&&&&\lambda_{1}^{(1)}&&&&
\end{array}
\end{equation}
From above we have the following definition.
\begin{definition}
\label{defGTsystem}
Given $\Lambda = \text{diag}\{\lambda_{1}, \ldots,\lambda_{n}\}$, such that $\lambda_{1} \geq \lambda_{2} \geq \cdots \geq \lambda_{n}$, the Gelfand-Tsetlin polytope $\Delta_{\Lambda}$ is defined by the collection of points ${\bf{u}} = ({\bf{u}}_{j}^{(k)})$ satisfying the Gelfand-Tsetlin patterns.
\end{definition}
Consider now the following index set 
\begin{equation}
\label{indexintsystem}
\mathcal{I}(\Lambda) = \big \{(j,k) \ \big | \ j,k \in \mathbb{Z}_{\geq 1}, \ \lambda_{j}^{(k)} \ {\text{is not a constant function on}} \ O(\Lambda) \big \},
\end{equation}
so that $|\mathcal{I}(\Lambda)| = \frac{1}{2}\dim_{\mathbb{R}}(O(\Lambda))$. Then we have the following results.

\begin{proposition}[\cite{GELFORB}]
The Gelfand-Tsetlin polytope $\Delta_{\Lambda}$ coincides with the image of the map 
\begin{equation}
\mathscr{H} \colon O(\Lambda) \to \mathbb{R}^{|\mathcal{I}(\Lambda)|}, \ \ \ \mathscr{H} = (\lambda_{j}^{(k)})_{(j,k) \in \mathcal{I}(\Lambda)},
\end{equation}
that is, $\Delta_{\Lambda} = \mathscr{H}(O(\Lambda))$.
\end{proposition}
\begin{proposition}[Proposition 5.3 in \cite{GELFORB}]
For each $(j,k) \in \mathcal{I}(\Lambda)$, we have that $\lambda_{j}^{(k)} \colon O(\Lambda) \to \mathbb{R}$ is continuous in every point of $O(\Lambda)$, and smooth at $Z \in O(\Lambda)$ if
\begin{equation}
\label{smoothinequality}
\lambda_{j}^{(k+1)}(Z) > \lambda_{j}^{(k)}(Z) > \lambda_{j+1}^{(k+1)}(Z).
\end{equation}
In particular, $\mathscr{H} = (\lambda_{j}^{(k)})_{(j,k) \in \mathcal{I}(\Lambda)}$ is smooth on the open dense subset $O(\Lambda)^{\mathscr{H}} = \mathscr{H}^{-1}({\rm{int}}(\Delta_{\Lambda}))$. Furthermore, the functions $\{\lambda_{j}^{(k)}\}_{(j,k) \in\mathcal{I}(\Lambda) }$ are functionally independent precisely in $O(\Lambda)^{\mathscr{H}}$.
\end{proposition}
Let $\mathcal{F}$ be a $d$-dimensional face of the Gelfand-Tsetlin polytope $\Delta_{\Lambda}$. Denoting by ${\rm{int}}(\mathcal{F})$ its relative interior, we have the following result.

\begin{lemma}[Lemma 6.7 in \cite{Cho}]
\label{actionface}
For each $d$-dimensional face $\mathcal{F}$ of $\Delta_{\Lambda}$, there exists $\mathcal{I}(\mathcal{F}) \subset \mathcal{I}(\Lambda)$, such that $|\mathcal{I}(\mathcal{F})| = d$, and for every $(j,k) \in \mathcal{I}(\mathcal{F})$, the component $\lambda_{j}^{(k)}$ is smooth on $\mathscr{H}^{-1}({\rm{int}}(\mathcal{F}))$. Furthermore, for every ${\bf{u}} \in {\rm{int}}(\mathcal{F})$, we have a fiberwise $T^{d}$-action on $\mathscr{H}^{-1}({\bf{u}})$ generated by $\{\lambda_{j}^{(k)}\}_{(j,k) \in \mathcal{I}(\mathcal{F})}.$
\end{lemma}

\begin{theorem}[\cite{Cho}] 
\label{fiberdescription}
Let ${\bf{u}}$ be a point lying on the relative interior of a $d$-dimensional face of the Gelfand-Tsetlin polytope $\Delta_{\Lambda}$. Then there is a $T^{d}$-equivariant diffeomorphism 
\begin{equation}
\Psi \colon \mathscr{H}^{-1}({\bf{u}}) \to T^{d} \times Y({\bf{u}}),
\end{equation}
where 
\begin{itemize}
\item[1)] a free $T^{d}$-action is given by Lemma \ref{actionface};
\item[2)] $T^{d}$ acts freely on the left factor $T^{d} \times Y({\bf{u}})$, and $Y({\bf{u}}) := \mathscr{H}^{-1}({\bf{u}})/T^{d}$.
\end{itemize}
\end{theorem}

\begin{remark}
\label{Topfibers}
It is worth pointing out that in the above theorem we also have the following facts:
\begin{enumerate}
\item[(i)] $\mathscr{H}^{-1}({\bf{u}}) = E_{n-1} \to E_{n-2} \to \cdots \to E_{1} \to E_{0} = {\text{point}}$ (iterated bundle structure);
\item [(ii)]$E_{k} \to E_{k - 1}$ is a $S_{k}$-bundle, where $S_{k}$ is a product of odd-dimensional spheres. Moreover, we have
\begin{equation}
\displaystyle{\dim_{\mathbb{R}}(\mathscr{H}^{-1}({\bf{u}})) = \sum_{k = 1}^{n-1}\dim_{\mathbb{R}}(S_{k})};
\end{equation}
\item[(iii)] $T^{d} = T^{d_{1}} \times T^{d_{2}} \times \cdots \times T^{d_{n-1}}$, such that $d_{k} = $ ``number of $S^{1}$-factors occurring in $S_{k}"$;
\item [(iv)]$Y({\bf{u}}) = Y_{n-1}({\bf{u}}) \to Y_{n-2}({\bf{u}}) \to \cdots \to Y_{1}({\bf{u}}) \to Y_{0}({\bf{u}}) = $ point (iterated bundle structure), such that 
\begin{equation}
Y_{k}({\bf{u}}) = E_{k}/T^{d_{1} + \cdots +d_{k}}.
\end{equation}
Moreover, $Y_{k}({\bf{u}}) \to Y_{k-1}({\bf{u}})$ is a $Y_{k}$-bundle, such that $Y_{k} = S_{k}/T^{d_{1} + \cdots +d_{k}}.$
\end{enumerate}
The results listed above provide a very rich description for the topology of the fibers of the Gelfand-Tsetlin integrable system, more details can be found in \cite[Theorem 4.12 and Theorem 6.11]{Cho}. 
\end{remark}

As a consequence of the previous facts, we have the following result.

\begin{corollary}[\cite{Cho}]
\label{lagranguantorusfiber}
For every ${\bf{u}} \in \Delta_{\Lambda}$, we have that $\mathscr{H}^{-1}({\bf{u}})$ is a Lagrangian torus if and only if ${\bf{u}} \in {\rm{int}}(\Delta_{\Lambda})$.
\end{corollary}

\subsection{Generalities on Abelian integrals and algebraic curves}

Given a Riemann surface $M$ and an open set $U \subseteq M$, we will denote by $\mathscr{O}(U)$ and by $\mathscr{M}(U)$ the ring of holomorphic functions and the field of meromorphic functions on $U$, respectively. Also, we will denote by $\mathscr{M}^{1}(U)$ the module of Abelian differentials (meromorphic 1-forms) defined on $U \subseteq M$. 
\begin{definition} An Abelian integral is an integral of the form
\label{defabel}
\begin{equation}
\label{abelint}
\int_{\gamma}R(w,\zeta(w))dw,
\end{equation}
where $\gamma \colon [0,1] \to \mathbb{C}$ is a continuous path in $\mathbb{C}$, $R(w,\zeta)$ is a rational function of two variables $w$ and $\zeta$, and $\zeta(w)$ is a continuous function of $w$ defined on the image of the path $\gamma$ such that $\zeta(w)$ and $w$ satisfy a polynomial relation $h(w,\zeta(w)) = 0$, for some $h(w,\zeta) \in \mathbb{C}[w,\zeta]$.
\end{definition}

In the setting of Definition \ref{defabel} we are interested in the particular case where $h(w,\zeta) = \zeta^{2} - P(w)$, for some $P(w) \in \mathbb{C}[w]$, such that $\deg(P) = N$, for some $N \in \mathbb{Z}_{\geq 0}$. In order to deal with this case from the modern perspective of Riemann surfaces and algebraic curves, let us collect some general facts about affine curves\footnote{For us an affine (algebraic) curve is not necessary irreducible.} of the form $\mathcal{C}:  \zeta^{2} - P(w) = 0$. In the general setting, we have the following facts about such a curve $\mathcal{C}$:
\begin{enumerate}
\item $\mathcal{C}$ can be regarded as the affine scheme ${\rm{Spec}}\Big(\frac{\mathbb{C}[w,\zeta]}{\langle \zeta^{2} - P(w) \rangle}\Big )$;
\item $\mathcal{C}$ is a smooth curve if and only if $P(w) = (w-\alpha_{1}) \cdots (w-\alpha_{N})$, such that $\alpha_{i} \neq \alpha_{j}$, $\forall i \neq j$. In this case, the algebraic curve $\mathcal{C}$ is called hyperelliptic curve and the points $p_{j} = (\alpha_{j},0) \in \mathcal{C}$, $j = 1,\ldots,N$, are called branch points of $\mathcal{C}$;
\item If $\mathcal{C}$ is singular, then its set of singular points ${\rm{Sing}}(\mathcal{C})$ is given by
\begin{equation}
{\rm{Sing}}(\mathcal{C}) = \big \{ (\alpha,0) \in \mathbb{C}^{2} \ \big | \ P(\alpha) = 0, \ {\rm{mult}}_{\alpha}(P) > 1\big \};
\end{equation}
\item $\mathcal{C}$ is irreducible if and only if $P(w)$ is not a square of an element of $\mathbb{C}[w]$;
\item If $P(w) = (q(w))^{2}$, for some $q(w) \in \mathbb{C}[w]$, then $\mathcal{C}$ decomposes into two irreducible smooth components $\mathcal{C}_{i}$, i = 1,2, such that 
\begin{center}
$\mathcal{C}_{1}: \zeta - q(w) = 0$ \ \ and \ \ $\mathcal{C}_{2}: \zeta + q(w) = 0$;
\end{center}
\item $\mathcal{C}$ is irreducible if and only if its set of regular points $\mathcal{C}_{\rm{reg}} := \mathcal{C} \backslash {\rm{Sing}}(\mathcal{C})$ is connected. In this last case $\mathcal{C}_{\rm{reg}}$ defines a non-compact Riemann surface;
\item If $\mathcal{C}$ is irreducible and $\Sigma$ is the compact Riemann surface defined by the compactification of $\mathcal{C}$, then we have that $w,\zeta \in \mathscr{M}(\Sigma)$ and
\begin{equation}
\mathscr{M}(\Sigma) \cong \mathbb{C}(w)[\zeta]/\langle \zeta^{2} - P(w) \rangle,
\end{equation}
in the sense of fields isomorphisms. Here $\langle \zeta^{2} - P(w) \rangle$ is the ideal generated by $h(w,\zeta)$ in $\mathbb{C}(w)[\zeta]$.
\item For every $f \in \mathscr{M}(\Sigma)$ there exists a rational function $R(w,\zeta)$, such that $f = R(w,\zeta)$. Conversely, every rational function $R(w,\zeta)$ whose denominator is not divisible by $h(w,\zeta) = \zeta^{2} - P(w)$ gives rise to a meromorphic function on $\Sigma$.
\end{enumerate}

Let $\mathcal{C}$ be an algebraic curve as above and suppose that $\mathcal{C}$ is irreducible. By considering $\mathcal{C}_{\rm{reg}}$ as a non-compact Riemann surface, we have $\mathcal{C}_{\rm{reg}} = \Sigma \backslash S$, where $S \subset \Sigma$ is a finite set. For any rational function $R(w,\zeta)$ whose denominator is not divisible by $h(w,\zeta) = \zeta^{2} - P(w)$, we have an associated abelian differential $\Omega := R(w,\zeta)dw$ on $\mathcal{C}_{\rm{reg}}$. From this, given a piecewise-smooth path $\gamma \colon [0,1] \to \mathbb{C}$, such that $P(\gamma(t)) \neq 0$, $\forall t \in [0,1]$, and a continuous function $\zeta(w)$ of $w$ defined on the image of the path $\gamma$, satisfying $\zeta(w)^{2} = P(w)$, we have 
\begin{equation}
  \int_{c}\Omega = \int_{\gamma}R(w,\zeta(w))dw, 
\end{equation}
where $c$ is the piecewise-smooth\footnote{Since $\zeta(w)^{2} = P(w)$ and $P(\gamma(t)) \neq 0$, $\forall t \in [0,1]$, we have $\frac{d}{dt}\zeta(\gamma(t)) = \frac{1}{2\zeta(\gamma(t))}\frac{d}{dt}P(\gamma(t))$, $\forall t \in [0,1]$ where $\gamma$ is smooth.} path obtained from the composition of $\gamma$ with the map $w \mapsto (w,\zeta(w))$. From the ideas above, we have the following definition.

\begin{definition}
A hyperelliptic integral is an integral of the form
\begin{equation}
\int_{c}R(w,\zeta)dw,
\end{equation}
where $R(w,\zeta)$ is a rational function on a hyperelliptic curve $\mathcal{C}:  \zeta^{2} - P(w) = 0,$ and $c \colon I \to \mathcal{C}$ is a piecewise-smooth path not passing through any pole of the meromorphic differential $R(w,\zeta)dw$.
\end{definition}
Let $\mathcal{C}:  \zeta^{2} - P(w) = 0$ be a hyperelliptic curve. Given a rational function $R(w,\zeta)$ on $\mathcal{C}$, by using the algebraic equation which defines $\mathcal{C}$, we have that $R(w,\zeta)$ can be written in the following form\footnote{See for instance \cite[p. 259]{Schlag}.} 
\begin{equation}
R(w,\zeta) = \frac{A(w) + \zeta B(w)}{C(w)},
\end{equation}
such that $A(w),B(w),C(w) \in \mathbb{C}[w]$. Conversely, any function as above gives rise to a rational function on $\mathcal{C}$. This last fact enables us to explicitly construct meromorphic differentials on $\mathcal{C}$ with certain prescribed residues. The next example will play an important role in the proof of Theorem \ref{Theo2}.
\begin{example}
\label{prescribedpoles}
Given a hyperelliptic curve $\mathcal{C}:  \zeta^{2} - P(w) = 0$, such that $P(w) = (w-\alpha_{1}) \cdots (w-\alpha_{N})$, we can consider the following meromorphic differential on $\mathcal{C}$
\begin{equation}
\Omega := \frac{1}{2}\frac{wdw}{w -\alpha_{k}}.
\end{equation}
for some $1 \leq k \leq N$. By definition $\Omega$ has one pole on $\mathcal{C}$ which coincides with the branch point $p_{k} = (\alpha_{k},0) \in \mathcal{C}$. By taking $\epsilon > 0$ small enough, such that $P(w) \neq 0$ for all $w \in \overline{B(\alpha_{j};\epsilon^{2})}\backslash \{\alpha_{j}\}$, for every $j = 1,\ldots,N$, we can define a smooth path $c_{j} \colon [0,2\pi] \to \mathcal{C}$, such that\footnote{The curve $c_{j}$ is just the composition of the loop $t \mapsto {\rm{e}}^{t\sqrt{-1}}$ with the standard local chart at $p_{j} = (\alpha_{j},0) \in \mathcal{C}$, see \cite[p. 141]{Schlag}.} 
\begin{equation}
c_{j}(t) := \bigg ( \epsilon^{2} {\rm{e}}^{2t \sqrt{-1}} + \alpha_{j}, \epsilon{\rm{e}}^{t \sqrt{-1}}\sqrt{\prod_{i \neq j, i = 1 }^{N}(\epsilon^{2}{\rm{e}}^{2t \sqrt{-1}} + \alpha_{j} - \alpha_{i})}\bigg).
\end{equation}
By definition we have that $c_{j}$ is a small loop around $p_{j} = (\alpha_{j},0) \in \mathcal{C}$, for every $j = 1,\ldots,N$, and from a straightforward application of the residue theorem it follows that
\begin{equation}
 \frac{1}{2\pi\sqrt{-1}}\oint_{c_{j}}\Omega =  \frac{1}{4\pi \sqrt{-1}}\oint_{ \gamma_{j}}\frac{wdw}{w -\alpha_{k}} = \frac{1}{2}{\rm{Ind}}(\gamma_{j};\alpha_{j}){\rm{Res}}\bigg (\frac{w}{w -\alpha_{k}};\alpha_{j} \bigg ),
\end{equation}
where $\gamma_{j} \colon [0,2\pi] \to \mathbb{C}$ is the closed curve $\gamma(t) =  \epsilon^{2} {\rm{e}}^{2t \sqrt{-1}} + \alpha_{j}$. Since 
\begin{center}
$\displaystyle {\rm{Ind}}(\gamma_{j};\alpha_{j}) = \frac{1}{2\pi\sqrt{-1}}\oint_{\gamma_{j}}\frac{dw}{w - \alpha_{j}} = 2$ \ \ and \ \ ${\rm{Res}}\big (\frac{w}{w -\alpha_{k}};\alpha_{j} \big ) = \alpha_{j}{\rm{Res}}\big (\frac{1}{w -\alpha_{k}};\alpha_{j} \big ) = \begin{cases}\alpha_{k}, \ {\text{if}} \ j = k \\ 0, \ {\text{if}}  \ j \neq k \end{cases}$,
\end{center}
we conclude that ${\rm{Res}}\big (\Omega;p_{k} \big ) = \alpha_{k}$ and ${\rm{Res}}\big (\Omega;p_{j} \big ) = 0$, $\forall j \neq k$.
\end{example}
\subsubsection{Proof of Theorem 2}

In order to prove Theorem \ref{Theo2}, let us give the general recipe of how to assign a family of algebraic curves to each adjoint orbit of ${\rm{U}}(n)$. In the general setting, let $\Lambda \in \mathfrak{u}(n) \backslash \{0\}$ be an arbitrary diagonal matrix with distinct eigenvalues $\lambda_{n_{1}},\ldots,\lambda_{n_{s}} = \lambda_{n}$, $s>0$, i.e., $\Lambda = \text{diag}\{\lambda_{1}, \ldots,\lambda_{n}\}$, where
\begin{equation}
\label{eigenmult}
\underbracket{\lambda_{1} = \cdots = \lambda_{n_{1}}}_{k_{1}} >\underbracket{\lambda_{n_{1} + 1} = \cdots = \lambda_{n_{2}}}_{k_{2}} > \cdots > \underbracket{\lambda_{n_{s-1}+1} = \cdots = \lambda_{n}}_{k_{s}},
\end{equation}
such that $k_{1} +  \cdots + k_{s} = n$. In this case, we have 
\begin{equation}
O(\Lambda) \cong {\rm{U}}(n)/({\rm{U}}(k_{1}) \times \cdots \times {\rm{U}}(k_{s})),
\end{equation}
such that $\dim_{\mathbb{R}}(O(\Lambda)) = n^{2} - \sum_{i}k_{i}^{2}$. If we consider the associated Lax matrix $L \colon O(\Lambda) \to \mathfrak{gl}(r,\mathbb{R})$, given by Theorem \ref{C5S5.2Teo5.2.10}, we have the following result.

\begin{lemma} 
\label{divisor}
In the above setting, for every $Z \in O(\Lambda)$, we have
\begin{equation}
\label{characteristiceint}
\det( w\mathds{1}_{r} - L(Z)) = (w - \lambda_{n_{1}})^{m_{1}} \cdots (w-\lambda_{n_{s}})^{m_{s}}F_{\Lambda}(w;Z), 
\end{equation}
where $m_{j} = \frac{(k_{j}-1)k_{j}}{2}$, $\forall j = 1,\ldots,s$, and $F_{\Lambda}(w;Z) \in \mathbb{C}[w]$.
\end{lemma}

\begin{proof}
Given $\Lambda = \text{diag}\{\lambda_{1}, \ldots,\lambda_{n}\}$, suppose that for some $\ell \in \mathbb{N}$ we have
\begin{equation}
\lambda_{1} \geq \cdots \geq \lambda_{i} > \underbrace{\lambda_{i+1} = \lambda_{i+2} = \cdots = \lambda_{i+\ell - 1} = \lambda_{i + \ell}}_{\ell} > \lambda_{i+\ell + 1} \geq \cdots \geq \lambda_{n}.
\end{equation}
Denoting $\lambda_{i+1} = \cdots = \lambda_{i + \ell} = c_{0}$, since $\lambda_{j} \geq \lambda_{j}^{(n-1)} \geq \lambda_{j+1}$, for all $1 \leq j \leq n-1$, it follows that $\lambda_{j}^{(n-1)} = c_{0}$, for all $i+1 \leq j \leq i+\ell-1$, that is
\begin{equation}
\cdots \geq \lambda_{i}^{(n-1)} \geq \underbrace{\lambda_{i+1}^{(n-1)} = \lambda_{i+2}^{(n-1)} = \cdots = \lambda_{i+\ell - 1}^{(n-1)}}_{\ell-1}\geq \lambda_{i+\ell}^{(n-1)} \geq \cdots.
\end{equation}
By observing that $\lambda_{j}^{(n-1)} \geq \lambda_{j}^{(n-2)} \geq \lambda_{j+1}^{(n-1)}$, for all $1 \leq j \leq n-2$, from a similar argument as above, we have that $\lambda_{j}^{(n-2)} = c_{0}$, for all $ i+1 \leq j \leq i+\ell-2$. By applying the above argument inductively, we conclude that 
\begin{equation}
\underbrace{\lambda_{i+1}^{(n-k)} = \lambda_{i+2}^{(n-k)} = \cdots = \lambda_{i+\ell - k}^{(n-k)}}_{\ell-k} = c_{0},
\end{equation}
for all $1 \leq k \leq \ell-1$. Hence, we have that $(w - c_{0})^{m}$, such that $m = \frac{\ell(\ell-1)}{2}$, is a divisor of the polynomial  $\det( w\mathds{1}_{r} - L(Z)) \in \mathbb{C}[w]$, for all $Z \in O(\Lambda)$. From this, if Eq. \ref{eigenmult} holds, we have that 
\begin{equation}
\det( w\mathds{1}_{r} - L(Z)) = (w - \lambda_{n_{1}})^{m_{1}} \cdots (w-\lambda_{n_{s}})^{m_{s}}F_{\Lambda}(w;Z), 
\end{equation}
for all $Z \in O(\Lambda)$, where $m_{j} = \frac{(k_{j}-1)k_{j}}{2}$, $\forall j = 1,\ldots,s$, and $F_{\Lambda}(w;Z) \in \mathbb{C}[w]$.
\end{proof}

In the setting of the previous lemma, since $k_{1} +  \cdots + k_{s} = n$ and $r = \frac{n(n-1)}{2}$, it follows that
\begin{equation}
\deg(F_{\Lambda}(w;Z)) = \frac{n(n-1)}{2} - \sum_{j = 1}^{s}\frac{(k_{j}-1)k_{j}}{2} = \frac{n^{2} - \sum_{i}k_{i}^{2}}{2} = \frac{\dim_{\mathbb{R}}(O(\Lambda))}{2}. 
\end{equation}
Actually, from the Gelfand-Tsetlin pattern (Eq. \ref{GTpattern}) and Definition \ref{defGTsystem}, we have that
\begin{equation}
\label{branchintegrable}
F_{\Lambda}(w;Z) = \prod_{(j,k) \in \mathcal{I}(\Lambda)}\big (w - \lambda_{j}^{(k)}(Z)\big),
\end{equation}
for all $Z \in O(\Lambda)$. By keeping the previous notation, we have the following result.
\begin{lemma}
\label{fundequation}
There exists a rational function $r_{\Lambda}(w) \in \mathbb{C}(w)$, such that, for all $Z \in O(\Lambda)$, the following equation holds
\begin{equation} 
\label{minimalspectral}
\textstyle{r_{\Lambda}(w)\det( w\mathds{1}_{r} - L(Z)) = \mu_{\Lambda}(w) \Big [\prod_{(j,k) \in \mathcal{I}(\Lambda)}\big (w - \lambda_{j}^{(k)}(Z)\big) \Big ]},
\end{equation}
where $\mu_{\Lambda}(w) = (w - \lambda_{n_{1}}) \cdots (w-\lambda_{n_{s}}) \in \mathbb{C}[w]$ is the minimal polynomial of $\Lambda \in \mathfrak{u}(n)$.
\end{lemma}
\begin{proof}
As before, let $\Lambda \in \mathfrak{u}(n) \backslash \{0\}$ be an arbitrary diagonal matrix with distinct eigenvalues $\lambda_{n_{1}},\ldots,\lambda_{n_{s}}$, with multiplicities given respectively by $k_{1}, \ldots, k_{s}$, such that $s > 0$. By setting
\begin{equation}
r_{\Lambda}(w):= \frac{1}{(w - \lambda_{n_{1}})^{m_{1}-1} \cdots (w-\lambda_{n_{s}})^{m_{s}-1}},
\end{equation}
such that $m_{j} = \frac{(k_{j}-1)k_{j}}{2}$, $\forall j = 1,\ldots,s$, it follows from Lemma \ref{divisor} that Eq. \ref{minimalspectral} holds for all $Z \in O(\Lambda)$.
\end{proof}
From above, for every $Z \in O(\Lambda)$ one can define a complex algebraic curve $\mathcal{C}_{Z}$ by setting
\begin{equation}
\label{zerolocuscharacteristic}
\mathcal{C}_{Z} :  \zeta^{2} - P_{\Lambda}(w;Z) = 0,
\end{equation}
where $P_{\Lambda}(w;Z) = r_{\Lambda}(w)\det( w\mathds{1}_{r} - L(Z))$. The motivation behind Eq. \ref{zerolocuscharacteristic} is the following. In the setting of Remark \ref{Topfibers}, let ${\bf{u}} \in \Delta_{\Lambda}$ be a point lying on the relative interior of a $d$-dimensional face. Considering the iterated bundle
\begin{equation}
\label{itbundle}
\mathscr{H}^{-1}({\bf{u}}) = E_{n-1} \to E_{n-2} \to \cdots \to E_{1} \to E_{0} = {\text{point}},
\end{equation}
we have that the description of $\mathscr{H}^{-1}({\bf{u}})$ provided in Theorem \ref{actionface} depends on each $S_{k}$-bundle $E_{k} \to E_{k - 1}$. By choosing any $Z \in \mathscr{H}^{-1}({\bf{u}})$, for each $k = 1,\ldots,n-1$, we have that $S_{k}$ is identified with the set of points $(\xi_{1},\ldots,\xi_{k}) \in \mathbb{C}^{k}$, satisfying either
\begin{equation}
\begin{cases} \det\big ( w\mathds{1}_{n} - \Lambda \big ) = \det\big( w\mathds{1}_{n-1} - L_{n-1}(Z)\big)\Bigg [ (w - c_{n-1}) - \displaystyle{\sum_{i = 1}^{n-1}}\frac{|\xi_{i}|^{2}}{w - \lambda_{i}^{(n-1)}(Z)} \Bigg ]
, \ \ if \ \ k = n-1,\\
\ \ \ \ \ \ \ \ \ \ \ \ \ \ \ \ \ \ \ \ \ \ \ \ \ \ \ \ \ \ \ \ \ \ {\text{or}} \\ 
\det\big ( w\mathds{1}_{k+1} - L_{k+1}(Z)\big ) = \det\big( w\mathds{1}_{k} - L_{k}(Z)\big)\Bigg [ (w - c_{k}) - \displaystyle {\sum_{i = 1}^{k}}\frac{|\xi_{i}|^{2}}{w - \lambda_{i}^{(k)}(Z)} \Bigg ], \ \ if \ \ 1 \leq k \leq n-2,  \end{cases}
\end{equation}
where $c_{1},\ldots,c_{n-1}$, are constants determined by ${\bf{u}} \in \Delta_{\Lambda}$. Therefore, each $S_{k}$-bundle $E_{k} \to E_{k - 1}$ can be described by means of the equations above and the patterns (inequalities) satisfied by the components of ${\bf{u}} \in \Delta_{\Lambda}$, see for instance \cite[\S 5]{Cho}. From Eq. \ref{CharacpolyLax}
and Lemma \ref{fundequation}, we have that the complex algebraic curve $\mathcal{C}_{Z}$ introduced in Eq. \ref{zerolocuscharacteristic} encodes in its set of branch points some fundamental elements which determine the geometry and topology of the iterated bundle giving in Eq. \ref{itbundle}. In this setting, we have the following theorem:
\begin{theorem}
\label{theo2proof}
Let $(O(\Lambda),\omega_{O(\Lambda)},\mathscr{H})$ be the Gelfand-Tsetlin integrable system associated to some adjoint orbit $O(\Lambda) \subset \mathfrak{u}(n)$ and let $\Delta_{\Lambda}$ be the corresponding Gelfand-Tsetlin polytope. Then, there exists a family of complex algebraic curves
\begin{equation}
\textstyle{\mathcal{M}_{\Lambda} := \Big \{\textstyle{{\mathcal{C}_{Z} = {\rm{Spec}}}\Big(\frac{\mathbb{C}[w,\zeta]}{\langle \zeta^{2} - P_{\Lambda}(w;Z) \rangle}\Big ) }\ \Big | \ Z \in O(\Lambda) \Big\}},
\end{equation}
such that $P_{\Lambda}(w;Z) := r_{\Lambda}(w)\det( w\mathds{1}_{r} - L(Z))$, for every $Z \in O(\Lambda)$, and $r_{\Lambda}(w) \in \mathbb{C}(w)$, satisfying the following:
\begin{enumerate}
\item[1)] For all $Z \in O(\Lambda)$, we have $P_{\Lambda}(w;Z) = \mu_{\Lambda}(w)P_{1}(w;Z) \cdots P_{n-1}(w;Z)$, such that $\mu_{\Lambda}(w)$ is the minimal polynomial of $\Lambda$ and $P_{k}(w;Z) \in \mathbb{C}[w]$, $\forall k = 1,\ldots, n-1$;
\item[2)] Given ${\bf{u}} \in \Delta_{\Lambda}$, then $\mathscr{H}^{-1}({\bf{u}})$ is a Lagrangian torus if and only if $\mathcal{C}_{Z}$ is a smooth hyperelliptic curve, for some $Z \in \mathscr{H}^{-1}({\bf{u}})$;
\item[3)] If ${\bf{u}} \in {\rm{int}}(\mathcal{F})$, for some $d$-dimensional face $\mathcal{F}$ of $\Delta_{\Lambda}$, then $\mathcal{C}_{Z}$ is singular for all $Z \in \mathscr{H}^{-1}({\bf{u}})$;
\item[4)] In the setting of item 3), if $0 \leq {\rm{Res}}\Big(\frac{P'_{k}(w;Z)}{P_{k}(w;Z)};\alpha\Big) \leq 1$, for all $(\alpha,0) \in {\rm{Sing}}(\mathcal{C}_{Z})$, and for all $1 \leq k \leq n-1$, then $\mathscr{H}^{-1}({\bf{u}}) \cong T^{d}$; 
\item[5)] If ${\bf{u}} \in {\rm{int}}(\mathcal{F})$, for some $d$-dimensional face $\mathcal{F}$ of $\Delta_{\Lambda}$, and $\mathscr{H}^{-1}({\bf{u}})$ is a non-torus fiber, then there exists some $(\alpha,0) \in {\rm{Sing}}(\mathcal{C}_{Z})$, for some $Z \in \mathscr{H}^{-1}({\bf{u}})$, such that ${\rm{mult}}_{\alpha}(P_{k}(w;Z)) > 1$, for some $1 \leq k \leq n-1$;
\item[6)] For all $Z \in O(\Lambda)^{\mathscr{H}}$ there exists an Abelian differential $\Omega(Z) \in \mathscr{M}^{1}(\mathcal{C}_{Z})$, with poles $p_{jk}(Z) \in \mathcal{C}_{Z}$, $(j,k) \in \mathcal{I}(\Lambda)$, such that 
\begin{equation}
\label{abeliandescription}
\mathscr{H}(Z) = \Bigg (\frac{1}{2\pi\sqrt{-1}}\oint_{c_{jk}}\Omega(Z)\Bigg)_{(j,k) \in \mathcal{I}(\Lambda)},
\end{equation}
where $c_{jk}$ is a small loop around the point $p_{jk}(Z) \in \mathcal{C}_{Z}$, for every $(j,k) \in \mathcal{I}(\Lambda)$.
\end{enumerate}
\end{theorem}

\begin{proof}
For every $Z \in O(\Lambda)$, consider $\mathcal{C}_{Z} :  \zeta^{2} - P_{\Lambda}(w;Z) = 0$, such that $P_{\Lambda}(w;Z) = r_{\Lambda}(w)\det( w\mathds{1}_{r} - L(Z))$, and define $\mathcal{M}_{\Lambda} := \{\mathcal{C}_{Z} \ | \ Z \in O(\Lambda)\}$. In order to prove item 1), we observe that  
\begin{equation}
P_{\Lambda}(w;Z) = \mu_{\Lambda}(w)F_{\Lambda}(w;Z),
\end{equation}
see Lemma \ref{divisor} and Lemma \ref{fundequation}. Since $\det( w\mathds{1}_{r} - L(Z)) =  \prod_{k = 1}^{n-1}\det\big (w \mathds{1}_{k} - L_{k}(Z)\big)$, we can set
\begin{equation}
P_{k}(w;Z) = {\rm{gcd}}\big (F_{\Lambda}(w;Z),\det( w\mathds{1}_{k} - L_{k}(Z)) \big ), \ \ i = 1, \dots, n-1.
\end{equation}
From this we obtain the decomposition $P_{\Lambda}(w;Z) = \mu_{\Lambda}(w)P_{1}(w;Z) \cdots P_{n-1}(w;Z)$. For item 2), we observe the following. Given ${\bf{u}} \in \Delta_{\Lambda}$, since $\mathscr{H}(Z) = {\bf{u}}$, $\forall Z \in \mathscr{H}^{-1}({\bf{u}})$, we have $\mathcal{C}_{Z} = \mathcal{C}_{Z'}$, $\forall Z,Z'\in \mathscr{H}^{-1}({\bf{u}})$. Indeed, denoting ${\bf{u}} = ({\bf{u}}_{j}^{(k)})_{(j,k) \in \mathcal{I}(\Lambda)}$, it follows from Eq. \ref{minimalspectral} that 
\begin{equation}
\textstyle{\mathcal{C}_{Z} = \big \{ (w,\zeta) \in \mathbb{C}^{2} \ \big | \ \zeta^{2} - \mu_{\Lambda}(w)\big [\prod_{(j,k) \in \mathcal{I}(\lambda)}(w - {\bf{u}}_{j}^{(k)})\big] = 0\big\}}, 
\end{equation}
for all $Z \in \mathscr{H}^{-1}({\bf{u}})$. The description above shows that $\mathcal{C}_{Z}$ is smooth, for some $Z \in  \mathscr{H}^{-1}({\bf{u}})$, if and only if the components of ${\bf{u}}$ are pairwise distinct and ${\rm{gcd}}\big (F_{\Lambda}(w;Z),\mu_{\Lambda}(w)\big) = 1$, i.e., if and only if ${\bf{u}} \in {\rm{int}}(\Delta_{\Lambda})$. Thus, from Corollary \ref{lagranguantorusfiber}, we obtain item 2). The proof of item 3) follows from the following fact. Given a $d$-dimensional face $\mathcal{F}$ of $\Delta_{\Lambda}$, if ${\bf{u}} \in {\rm{int}}(\mathcal{F})$, from the Gelfand-Tsetlin patterns (Eq. \ref{GTpattern}) we have that at least one of the following equalities is satisfied by the components of ${\bf{u}}$:
\begin{center}
${\bf{u}}_{j}^{(k)} = \lambda, \ \ {\bf{u}}_{j}^{(k)} = {\bf{u}}_{j}^{(k+1)}$ \ \ or \ \ ${\bf{u}}_{j}^{(k)} = {\bf{u}}_{j+1}^{(k+1)}$,
\end{center}
for some eigenvalue $\lambda \in \mathbb{C}$ of $\Lambda$. Therefore, if ${\bf{u}} \in {\rm{int}}(\mathcal{F})$, for some $d$-dimensional face $\mathcal{F}$ of $\Delta_{\Lambda}$, we have have that $P_{\Lambda}(w;Z)$ has at least one root with multiplicity $\geq 2$, $\forall Z \in \mathscr{H}^{-1}({\bf{u}})$, which implies that $\mathcal{C}_{Z}$ is singular for all $Z \in \mathscr{H}^{-1}({\bf{u}})$, so we have item 3). Now, given ${\bf{u}} \in {\rm{int}}(\mathcal{F})$, for some $d$-dimensional face $\mathcal{F}$, if we have as stated in item 4) that
\begin{equation}
\textstyle{0 \leq {\rm{Res}}\Big(\frac{P'_{k}(w;Z)}{P_{k}(w;Z)}; \alpha\Big) \leq 1}, 
\end{equation}
for all $(\alpha,0) \in {\rm{Sing}}(\mathcal{C}_{Z})$ and for all $1 \leq k \leq n-1$, it follows that 
\begin{center}
$P_{k}(\alpha;Z) = 0$ \ \ \ if and only if \ \ \ \ ${\rm{mult}}_{\alpha}(P_{k}(w;Z)) = 1,$
\end{center}
for all $(\alpha,0) \in {\rm{Sing}}(\mathcal{C}_{Z})$ and for all $1 \leq k \leq n-1$. On the other hand, since $\mathscr{H}^{-1}({\bf{u}}) \cong T^{d} \times Y({\bf{u}})$ (Theorem \ref{fiberdescription}), if $\dim_{\mathbb{R}}(Y({\bf{u}})) > 0$, it follows that at some stage $1 \leq k_{0} \leq n-1$ of the iterated bundle given in item (i) of Remark \ref{Topfibers} we have a $S^{2\ell-1}$-factor in $S_{k_{0}}$ $(\ell > 1)$. This factor corresponds to the pattern
\begin{center}
$\begin{cases} \lambda_{j} > {\bf{u}}_{j}^{(n-1)} = \lambda_{j+1} = \cdots = \lambda_{j + \ell} = {\bf{u}}_{j + \ell}^{(n-1)} > \lambda_{j + \ell + 1}, \ \ \ \text{if} \ \ k_{0} = n-1,\\ 
\ \ \ \ \ \ \ \ \ \ \ \ \ \ \ \ \ \ \ \ \ \ \ \ \ \ \ \ \ \ \ \ \ \ \ \ \ \ \ \ \ \ \ \ {\text{or}}\\ 
{\bf{u}}_{j}^{(k_{0}+1)} > {\bf{u}}_{j}^{(k_{0})} = {\bf{u}}_{j+1}^{(k_{0}+1)} = \cdots = {\bf{u}}_{j + \ell}^{(k_{0}+1)} = {\bf{u}}_{j + \ell}^{(k_{0})} > {\bf{u}}_{j + \ell + 1}^{(k_{0}+1)}, \ \ \ \text{if} \ \ 1 \leq k_{0} \leq n-2, \end{cases}$
\end{center}
for some $j$, see for instance \cite[Lemma 5.14]{Cho}. Thus, denoting $\alpha =  {\bf{u}}_{j}^{(k_{0})} = \cdots = {\bf{u}}_{j + \ell}^{(k_{0})}$, from above we have $(\alpha,0) \in {\rm{Sing}}(\mathcal{C}_{Z})$, such that ${\rm{mult}}_{\alpha}(P_{k_{0}}(w;Z)) > 1$. Therefore, if $0 \leq {\rm{Res}}\Big(\frac{P'_{k}(w;Z)}{P_{k}(w;Z)};\alpha\Big) \leq 1$, for all $(\alpha,0) \in {\rm{Sing}}(\mathcal{C}_{Z})$ and for all $1 \leq k \leq n-1$, we must have that $\dim_{\mathbb{R}}(Y({\bf{u}})) = 0$, i.e., $\mathscr{H}^{-1}({\bf{u}}) \cong T^{d}$, which concludes the proof of item 4). The proof of item 5) is a consequence of the previous argument, that is, if ${\bf{u}} \in {\rm{int}}(\mathcal{F})$, for some $d$-dimensional face $\mathcal{F}$ of $\Delta_{\Lambda}$, and $\mathscr{H}^{-1}({\bf{u}}) \cong T^{d} \times Y({\bf{u}})$, with $\dim_{\mathbb{R}}(Y({\bf{u}})) > 0$, then there exists $(\alpha,0) \in {\rm{Sing}}(\mathcal{C}_{Z})$, for some $Z \in \mathscr{H}^{-1}({\bf{u}}) $, such that ${\rm{mult}}_{\alpha}(P_{k_{0}}(w;Z)) > 1$, for some $1 \leq k_{0} \leq n-1$.

In order to prove item 6) we observe that, since $O(\Lambda)^{\mathscr{H}} = \mathscr{H}^{-1}({\rm{int}}(\Delta_{\Lambda}))$, for every $Z \in O(\Lambda)^{\mathscr{H}}$ we have that the roots of $P_{\Lambda}(w;Z)$ are pairwise distinct, which implies that the roots of $F_{\Lambda}(w;Z)$ are also pairwise distinct, notice that the roots of $F_{\Lambda}(w;Z)$ are the components of $\mathscr{H}(Z) = (\lambda_{j}^{(k)}(Z))_{(j,k) \in \mathcal{I}(\Lambda)}$. Since in this case $\mathcal{C}_{Z}$ is a hyperelliptic curve, we can take $\Omega(Z) \in \mathscr{M}^{1}(\mathcal{C}_{Z})$, such that
\begin{equation}
\label{AbelGT}
\Omega(Z) := \frac{1}{2}\frac{w F_{\Lambda}'(w;Z)}{F_{\Lambda}(w;Z)}dw =  \sum_{(j,k) \in \mathcal{I}(\Lambda)}\frac{1}{2}\frac{wdw}{w - \lambda_{j}^{(k)}(Z)}.
\end{equation}
The poles of $\Omega(Z)$ on $\mathcal{C}_{Z}$ are given by the branch points $p_{jk}(Z) = ( \lambda_{j}^{(k)}(Z),0) \in \mathcal{C}_{Z}$, where $(j,k) \in \mathcal{I}(\Lambda)$. From a straightforward computation as in Example \ref{prescribedpoles}, it follows that 
$\lambda_{j}^{(k)}(Z) = \frac{1}{2\pi\sqrt{-1}}\oint_{c_{jk}}\Omega(Z)$, for some small loop $c_{jk}$ around the point $p_{jk}(Z) \in \mathcal{C}_{Z}$, for all $(j,k) \in \mathcal{I}(\Lambda)$. From above we obtain item 6) and conclude the proof.
\end{proof}

\subsubsection{Examples and final comments}

In this subsection, we discuss by means of some concrete examples how the results of Theorem \ref{theo2proof} can be applied in the study of the Liouville foliation defined by the Gelfand-Tsetlin integrable systems. The main purpose is to provide a classification for the Gelfand-Tsetlin fibers in terms of elementary tools of the theory of algebraic curves and investigate their relationship with vanishing cycles. 

\begin{example}[Warm-up example]
Consider the Hamiltonian ${\rm{U}}(2)$-space $(O(\Lambda),\omega_{O(\Lambda)},{\rm{U}}(2),\Phi)$, such that $\Lambda = {\text{diag}} \big \{\lambda_{1}, \lambda_{2}\big \}$, where $\lambda_{1} > \lambda_{2}$. In this case we have $O(\Lambda) \cong \mathbb{C}{\rm{P}}^{1}$ and the Gelfand-Tsetlin system $(\mathbb{C}{\rm{P}}^{1},\omega_{\mathbb{C}{\rm{P}}^{1}},\mathscr{H} )$ can be obtained as in Theorem \ref{C5S5.2Teo5.2.10} through the spectral equation 
\begin{equation}
\det (w \mathds{1}_{1} -  L) = (w -  \lambda_{1}^{(1)}) = 0,
\end{equation}
observe that  $L = L_{1}$. Thus, we have $\mathscr{H}(Z) = \lambda_{1}^{(1)}(Z)$, for all $Z \in O(\Lambda)$. By applying Theorem \ref{theo2proof}, we obtain a family of algebraic curves $\mathscr{M}_{\Lambda} = \{\mathcal{C}_{Z} \ | \ Z \in \mathbb{C}{\rm{P}}^{1}\}$, such that
\begin{equation}
\mathcal{C}_{Z} : \zeta^{2} - \mu_{\Lambda}(w)F_{\Lambda}(w;Z) = 0,
\end{equation}
for all $Z \in \mathbb{C}{\rm{P}}^{1}$, where $\mu_{\Lambda}(w) = (w-\lambda_{1})(w-\lambda_{2})$ and $F_{\Lambda}(w;Z) = (w-\lambda_{1}^{(1)}(Z))$. In this case, the Gelfand-Tsetlin polytope $\Delta_{\Lambda}$ is defined by the pattern 
\begin{equation}
\lambda_{2} \leq {\bf{u}}_{1}^{(1)} \leq \lambda_{1},
\end{equation}
and for every ${\bf{u}} = {\bf{u}}_{1}^{(1)} \in \Delta_{\Lambda}$, we have $\mathscr{H}^{-1}({\bf{u}}) = T^{1}$, if ${\bf{u}} \in {\rm{Int}}(\Delta_{\Lambda})$, and $\mathscr{H}^{-1}({\bf{u}}) = {\text{point}}$, if $ {\bf{u}} \in \partial \Delta_{\Lambda}$. Given ${\bf{u}} \in {\rm{Int}}(\Delta_{\Lambda})$, for all $Z \in \mathscr{H}^{-1}({\bf{u}})$ we have that $\mathcal{C}_{Z} \in \mathscr{M}_{\Lambda}$ is a smooth algebraic curve defined by the affine part of a genus 1 Riemann surface $\Sigma_{1}$ (Fig. \ref{curveU2}).

\begin{figure}[H]
\centering\includegraphics[scale=.25]{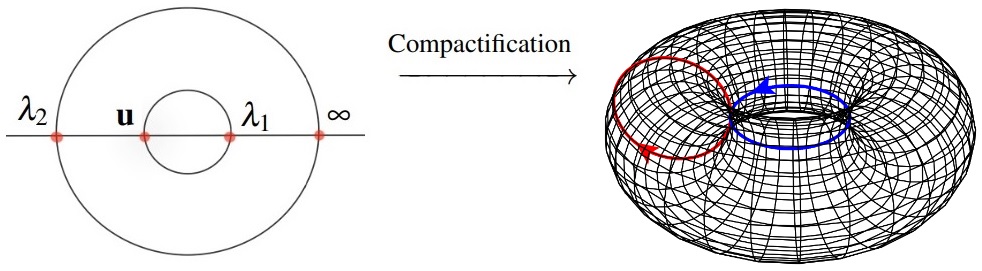}
\caption{On the right-hand side we have the Riemann surface $\Sigma_{1}$ which compactifies $\mathcal{C}_{Z}$.}
\label{curveU2}
\end{figure}

If ${\bf{u}} \in \partial \Delta_{\Lambda}$, we have two possibilities, i.e., ${\bf{u}} = \lambda_{2}$ or ${\bf{u}} = \lambda_{1}$. In both cases, for every $Z \in \mathscr{H}^{-1}({\bf{u}})$, we have that $\mathcal{C}_{Z} \in \mathscr{M}_{\Lambda}$ is a singular algebraic curve (Fig. \ref{fig_GT_U2_1} and Fig. \ref{fig_GT_U2_2}). 
\begin{figure}[H]
\begin{subfigure}{0.5\textwidth}
\centering\includegraphics[scale=.25]{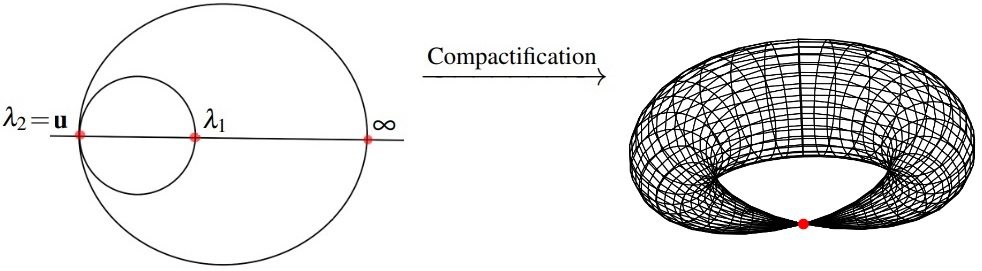} 
\caption{$\mathcal{C}_{Z} : \zeta^{2} - (w-\lambda_{1})(w-\lambda_{2})^{2} = 0$.}
\label{fig_GT_U2_1}
\end{subfigure}
\begin{subfigure}{0.5\textwidth}
\centering\includegraphics[scale=.25]{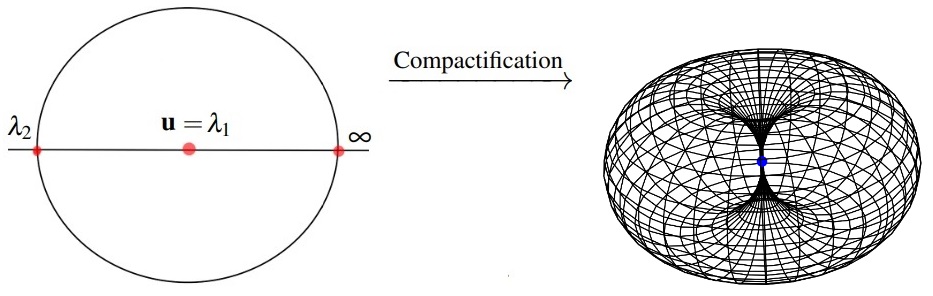}
\caption{$\mathcal{C}_{Z} : \zeta^{2} - (w-\lambda_{1})^{2}(w-\lambda_{2}) = 0$.}
\label{fig_GT_U2_2}
\end{subfigure}
\caption{On the left-hand side we have a pinched torus associated to the singular fiber $\mathscr{H}^{-1}(\lambda_{2})$. On the right-hand side we have a horn torus associated to the singular fiber $\mathscr{H}^{-1}(\lambda_{1})$.}
\label{fig:image2}
\end{figure}
As it can be seen above, the singular fibers of the Gelfand-Tsetlin system correspond to singular curves obtained from the vanish of certain homology cycles of a generic smooth element in $\mathscr{M}_{\Lambda}$ (Fig. \ref{vcyclesrepresentation}). 
\begin{figure}[H]
\centering\includegraphics[scale=.25]{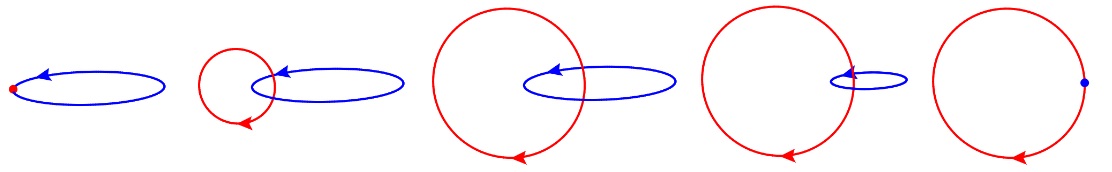}
\caption{Vanishing cycles which characterize the singular fibers of the GT-system.}
\label{vcyclesrepresentation}
\end{figure}

For every ${\bf{u}} \in \Delta_{\Lambda} = [\lambda_{2},\lambda_{1}]$, and every $Z \in \mathscr{H}^{-1}({\bf{u}})$, we have ${\rm{Ord}}_{p}(\mathcal{C}_{Z}) = 1$ or $2$, $\forall p \in \mathcal{C}_{Z}$. Moreover, it is straightforward to show that:
\begin{enumerate}
\item ${\rm{Ord}}_{p}(\mathcal{C}_{Z}) = 1$, $\forall p \in \mathcal{C}_{Z}$, if and only if $\mathscr{H}^{-1}({\bf{u}}) \cong T^{1}$;
\item ${\rm{Ord}}_{p}(\mathcal{C}_{Z}) = 2$, for some $p \in \mathcal{C}_{Z}$, if and only if $\mathscr{H}^{-1}({\bf{u}}) = {\text{point}}$.
\end{enumerate}
From above we have a characterization of the Gelfand-Tsetlin fibers purely in terms of the elements in $\mathscr{M}_{\Lambda}$.
\end{example}

\begin{example}[Generic orbits in $\mathfrak{u}(3)$]
\label{example_U(3)}
Consider the Hamiltonian ${\rm{U}}(3)$-space $(O(\Lambda),\omega_{O(\Lambda)},{\rm{U}}(3),\Phi)$, where $\Lambda = {\text{diag}} \big \{\lambda_{1}, \lambda_{2}.\lambda_{3}\big \}$, such that $\lambda_{1} > \lambda_{2} > \lambda_{3}$. It is straightforward to see that $O(\Lambda) \cong {\rm{Fl}}(3)$, and from Theorem \ref{C5S5.2Teo5.2.10} we have the Gelfand-Tsetlin system $({\rm{Fl}}(3),\omega_{{\rm{Fl}}(3)},\mathscr{H} )$ given by the solutions of the spectral equation 
\begin{equation}
\det (w \mathds{1}_{3} -  L) = (w - \lambda_{1}^{(1)})(w - \lambda_{1}^{(2)})(w - \lambda_{2}^{(2)}) = 0,
\end{equation}
i.e., $\mathscr{H} = (\lambda_{1}^{(2)},\lambda_{2}^{(2)},\lambda_{1}^{(1)})$. Applying Theorem \ref{theo2proof} we get $\mathscr{M}_{\Lambda} = \{\mathcal{C}_{Z} \ | \ Z \in {\rm{Fl}}(3)\}$, such that
\begin{equation}
\mathcal{C}_{Z} : \zeta^{2} - \mu_{\Lambda}(w)F_{\Lambda}(w;Z)= 0,
\end{equation}
for every $Z \in {\rm{Fl}}(3)$, where $\mu_{\Lambda}(w) = (w - \lambda_{1})(w - \lambda_{2})(w - \lambda_{3})$ and $F_{\Lambda}(w;Z) = \det (w \mathds{1}_{3} -  L(Z))$. It is worth mentioning that also from Theorem \ref{theo2proof} we can describe the Guillemin and Sternberg’s action coordinates $\{\lambda_{1}^{(2)},\lambda_{2}^{(2)},\lambda_{1}^{(1)}\}$ on ${\rm{Fl}}(3)^{\mathscr{H}}$ purely in terms of hyperelliptic integrals. In fact, since in this case $F_{\Lambda}(w;-) = \det (w \mathds{1}_{3} -  L)$, by following Eq. \ref{AbelGT}, for every $Z \in {\rm{Fl}}(3)^{\mathscr{H}}$, we have\footnote{Observe that $\partial_{w}\det (w \mathds{1}_{3} -  L(Z)) = \det (w \mathds{1}_{3} -  L(Z)){\rm{tr}}[(w\mathds{1}_{3} - L(Z))^{-1}]$, for every $Z \in {\rm{Fl}}(3)^{\mathscr{H}}$. Thus, after some suitable rearrangement in Eq. \ref{GScoordinateresolvent}, one can also express Guillemin and Sternberg’s action coordinates in terms of the resolvent $\mathscr{R}(w;L(Z)) := (w\mathds{1}_{3} - L(Z))^{-1}$, $w \in \mathbb{C} \backslash \sigma(L(Z))$.}
\begin{equation}
\label{GScoordinateresolvent}
\lambda_{j}^{(k)}(Z) = \frac{1}{4\pi\sqrt{-1}}\oint_{c_{jk}}{\textstyle{{\rm{tr}}\big [(\mathds{1}_{3} -  \frac{1}{w}L(Z))^{-1}\big]dw}}, \ \ \ \ \  \forall 1 \leq j \leq k \leq 2.
\end{equation}
Now, as in the previous example, let us analyze the relationship between singular Gelfand-Tsetlin fibers and vanishing cycles. In this case, the associated Gelfand-Tsetlin polytope is defined by the following patterns: 
\begin{equation}
\label{GTpatternsU(3)}
\begin{matrix} \lambda_{1}&&&&\lambda_{2}&&&&\lambda_{3}\\  
&\rotatebox[origin=c]{-50}{$\geq$}&&\rotatebox[origin=c]{50}{$\geq$}&&\rotatebox[origin=c]{-50}{$\geq$}&&\rotatebox[origin=c]{50}{$\geq$}\\ 
&&{\bf{u}}_{1}^{(2)}&&&&{\bf{u}}_{2}^{(2)}&&\\
&&&\rotatebox[origin=c]{-50}{$\geq$}&&\rotatebox[origin=c]{50}{$\geq$}&&&\\
&&&&{\bf{u}}_{1}^{(1)}&&&&
\end{matrix}
\end{equation}
From above, given ${\bf{u}} \in {\rm{Int}}(\Delta_{\Lambda})$, for all $Z \in \mathscr{H}^{-1}({\bf{u}}) \cong T^{3}$, we have that $\mathcal{C}_{Z} \in \mathscr{M}_{\Lambda}$ is a smooth algebraic curve defined by the affine part of a genus 2 Riemann surface $\Sigma_{2}$ (Fig. \ref{fig_GT_U3} and Fig. \ref{fig_GT_U3_1}).

\begin{figure}[H]
\begin{subfigure}{0.5\textwidth}
\centering\includegraphics[scale=.27]{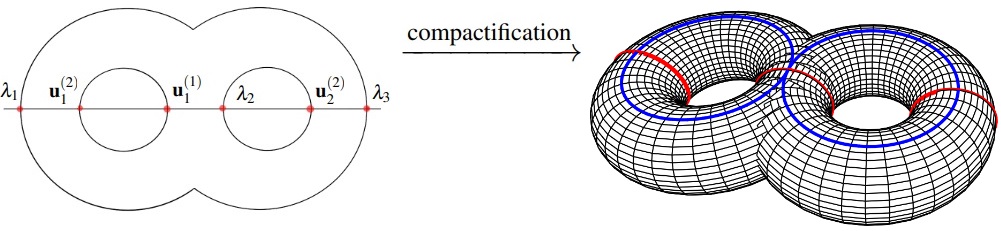} 
\caption{${\bf{u}}_{1}^{(1)} \geq \lambda_{2}$.}
\label{fig_GT_U3}
\end{subfigure}
\begin{subfigure}{0.5\textwidth}
\centering\includegraphics[scale=.27]{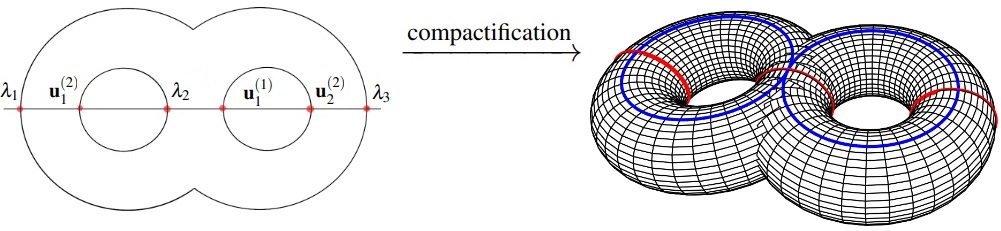}
\caption{$ \lambda_{2}\geq {\bf{u}}_{1}^{(1)}$.}
\label{fig_GT_U3_1}
\end{subfigure}
\caption{Riemann surface which compactifies $\mathcal{C}_{Z}$, for all $Z \in \mathscr{H}^{-1}({\bf{u}})$, such that ${\bf{u}} \in {\rm{Int}}(\Delta_{\Lambda})$.}
\end{figure}
As in the previous example, we can distinguish the singular fibers of the Gelfand-Tsetlin integrable system by means of the vanish of certain homology cycles (Fig. \ref{reghcycles}) of a generic smooth element in $\mathscr{M}_{\Lambda}$.

\begin{figure}[H]
\centering\includegraphics[scale=.30]{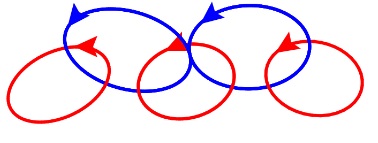}
\caption{The homology cycles for $\Sigma_{2}$.}
\label{reghcycles}
\end{figure}

\begin{figure}[H]
\begin{subfigure}{0.5\textwidth}
\centering\includegraphics[scale=.30]{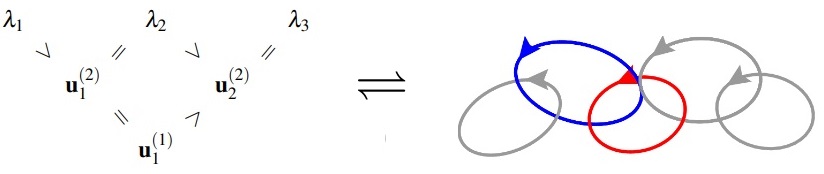} 
\caption{$0$-dimensional face; Associated fiber $\mathscr{H}^{-1}({\bf{u}}) = {\text{point}}$.}
\label{vcycle_1}
\end{subfigure}
\begin{subfigure}{0.5\textwidth}
\centering\includegraphics[scale=.30]{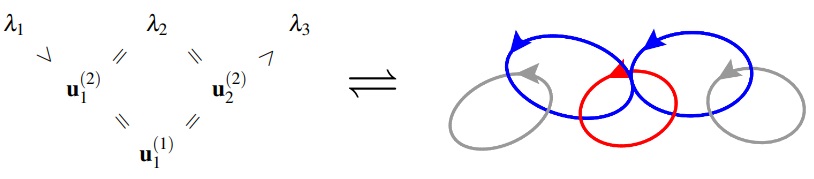}
\caption{$0$-dimensional face; Associated fiber $\mathscr{H}^{-1}({\bf{u}}) \cong S^{3}$.}
\label{vcycle_2}
\end{subfigure}
\begin{subfigure}{0.5\textwidth}
\centering\includegraphics[scale=.30]{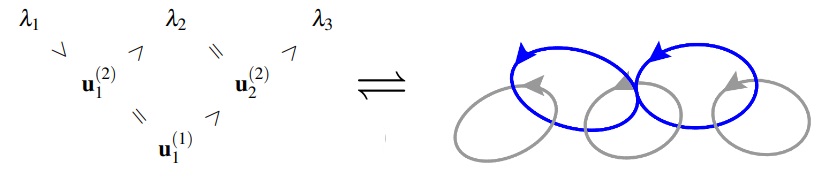}
\caption{$1$-dimensional face; Associated fiber $\mathscr{H}^{-1}({\bf{u}}) \cong T^{1}$.}
\label{vcycle_3}
\end{subfigure}
\begin{subfigure}{0.5\textwidth}
\centering\includegraphics[scale=.30]{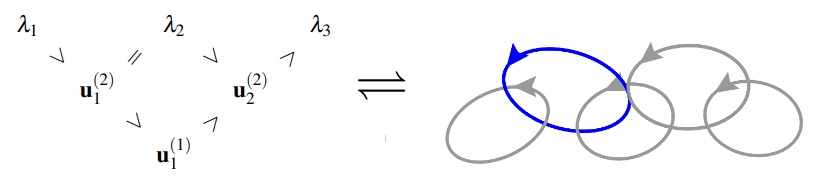}
\caption{$2$-dimensional face; Associated fiber $\mathscr{H}^{-1}({\bf{u}}) \cong T^{2}$.}
\label{vcycle_4}
\end{subfigure}
\caption{Examples of the correspondence between the faces of $\Delta_{\Lambda}$ and vanishing cycles. On the right-hand side of Fig. \ref{vcycle_1}-\ref{vcycle_4} we have the vanishing cycles represented by the blue and red colored circles.}
\label{facesandcycles}
\end{figure}

By looking at the face structure of the underlying GT-polytope $\Delta_{\Lambda}$ described in terms of certain subgraphs of the ladder diagram corresponding to $\Lambda$ (e.g. \cite{AnChoKim}), and observing that $\mathscr{H}^{-1}({\bf{u}}_{1}) \cong \mathscr{H}^{-1}({\bf{u}}_{2})$ if ${\bf{u}}_{1}$ and ${\bf{u}}_{2}$ are contained in the relative interior of the same face of $\Delta_{\Lambda}$, e.g. \cite{Cho}, one has the precise description of the vanishing cycles which defines the singular curve corresponding to each singular fiber of  the Gelfand-Tsetlin system (Fig. \ref{vcycle_1}-\ref{vcycle_4}). From the definition of $\mathscr{M}_{\Lambda}$, the unique\footnote{See for instance \cite[Theorem 1.1]{NU}.} non-torus Lagrangian fiber $\mathscr{H}^{-1}({\bf{u}}) \cong S^{3}$ (Fig. \ref{vcycle_2}) corresponds to the singular algebraic curve 
\begin{equation}
\mathcal{C}_{Z} : \zeta^{2} - (w - \lambda_{1})(w - \lambda_{2})^{4}(w - \lambda_{3}) = 0, \ \ \ \forall Z \in \mathscr{H}^{-1}({\bf{u}}).
\end{equation}
It is straightforward to see from the associated GT-patterns (Eq. \ref{GTpatternsU(3)}) that, $\forall {\bf{u}} \in \partial \Delta_{\Lambda}$, $\mathscr{H}^{-1}({\bf{u}})$ is Lagrangian if and only if there exists $p \in \mathcal{C}_{Z}$, for some $Z \in \mathscr{H}^{-1}({\bf{u}})$, such that ${\rm{Ord}}_{p}(\mathcal{C}_{Z}) = 4$. Joining this last fact with the results of Theorem \ref{theo2proof} we conclude that, for every ${\bf{u}} \in \Delta_{\Lambda}$ and every $Z \in \mathscr{H}^{-1}({\bf{u}})$, we have $1 \leq {\rm{Ord}}_{p}(\mathcal{C}_{Z}) \leq 4$, \ \ \ \ $\forall p \in \mathcal{C}_{Z}$. Moreover, we obtain the following characterization:
\begin{enumerate}
\item (Lagrangian torus) ${\rm{Ord}}_{p}(\mathcal{C}_{Z}) = 1$, $\forall p \in \mathcal{C}_{Z}$, if and only if $\mathscr{H}^{-1}({\bf{u}}) \cong T^{3}$;
\item (Isotropic non-Lagrangian) $1 < {\rm{Ord}}_{p}(\mathcal{C}_{Z}) < 4$, for some $p \in \mathcal{C}_{Z}$, if and only if $\mathscr{H}^{-1}({\bf{u}}) = {\text{point}}$ or $\mathscr{H}^{-1}({\bf{u}}) \cong T^{d}$, $d = 1,2$;
\item (Non-torus Lagrangian) ${\rm{Ord}}_{p}(\mathcal{C}_{Z})=4$, for some $p \in \mathcal{C}_{Z}$, if and only if $\mathscr{H}^{-1}({\bf{u}}) \cong S^{3}$.
\end{enumerate}
Therefore, we have a classification for the leaves of the underlying Liouville foliation in terms of $\mathscr{M}_{\Lambda}$.
\end{example}

\begin{example}[Non-regular orbits]
\label{non-regular}
Consider now the Hamiltonian ${\rm{U}}(4)$-space $(O(\Lambda),\omega_{O(\Lambda)},{\rm{U}}(4),\Phi)$, such that 
\begin{center}
$\Lambda = {\text{diag}} \big \{ \lambda_{1}, \lambda_{2}, \lambda_{3}, \lambda_{4} \big \}$,
\end{center}
where $\lambda_{1} = \lambda_{2} > \lambda_{3} = \lambda_{4}$. In this particular case we have $O(\Lambda) \cong {\rm{Gr}}(2,4)$, and from Theorem \ref{C5S5.2Teo5.2.10} we obtain the Gelfand-Tsetlin integrable system $({\rm{Gr}}(2,4),\omega_{{\rm{Gr}}(2,4)},\mathscr{H} )$ by means of the solutions of the spectral equation
\begin{equation}
\det \big ( w\mathds{1}_{6} - L \big) = \big (w-\lambda_{1}\big)\big (w - \lambda_{2}^{(3)}\big)\big (w-\lambda_{3}\big)\big (w - \lambda_{1}^{(2)}\big)\big (w-\lambda_{2}^{(2)}\big)\big (w-\lambda_{1}^{(1)}\big) = 0.
\end{equation}
From Theorem \ref{theo2proof} we get $\mathscr{M}_{\Lambda} = \{\mathcal{C}_{Z} \ | \ Z \in {\rm{Gr}}(2,4)\}$, such that
\begin{equation}
\mathcal{C}_{Z} : \zeta^{2} - \mu_{\Lambda}(w)F_{\Lambda}(w;Z)= 0,
\end{equation}
for every $Z \in {\rm{Gr}}(2,4)$, where $\mu_{\Lambda}(w) = (w - \lambda_{1})(w - \lambda_{3})$ and
\begin{center}
$F_{\Lambda}(w;Z) = \big (w-\lambda_{1}^{(1)}(Z)\big)\big (w - \lambda_{1}^{(2)}(Z)\big)\big (w-\lambda_{2}^{(2)}(Z)\big)\big (w - \lambda_{2}^{(3)}(Z)\big).$
\end{center}
The associated Gelfand-Tsetlin patterns in this case are given by: 
\begin{equation}
\label{GTpatternGrassmann}
\begin{array}{*{20}c} 
\lambda_{1}&&&&\lambda_{1}&&&&\lambda_{3}&&&&\lambda_{3}\\  
&\rotatebox[origin=c]{-50}{$=$}&&\rotatebox[origin=c]{50}{$=$}&&\rotatebox[origin=c]{-50}{$\geq$}&&\rotatebox[origin=c]{50}{$\geq$}&&\rotatebox[origin=c]{-50}{$=$}&&\rotatebox[origin=c]{50}{$=$}\\ 
&&\lambda_{1}&&&&{\bf{u}}_{2}^{(3)}&&&&\lambda_{3}&&\\
&&&\rotatebox[origin=c]{-50}{$\geq$}&&\rotatebox[origin=c]{50}{$\geq$}&&\rotatebox[origin=c]{-50}{$\geq$}&&\rotatebox[origin=c]{50}{$\geq$}&&\\
&&&&{\bf{u}}_{1}^{(2)}&&&&{\bf{u}}_{2}^{(2)}&&&&\\
&&&&&\rotatebox[origin=c]{-50}{$\geq$}&&\rotatebox[origin=c]{50}{$\geq$}&&&&&\\
&&&&&&{\bf{u}}_{1}^{(1)}&&&&&&
\end{array}
\end{equation}
Given ${\bf{u}} \in {\rm{Int}}(\Delta_{\Lambda})$, as in the previous example, for all $Z \in \mathscr{H}^{-1}({\bf{u}}) \cong T^{4}$, we have that $\mathcal{C}_{Z} \in \mathscr{M}_{\Lambda}$ is a smooth algebraic curve defined by the affine part of a genus 2 Riemann surface $\Sigma_{2}$ (Fig. \ref{fig_GT_U4} and Fig. \ref{fig_GT_U4_1}).
\begin{figure}[H]
\begin{subfigure}{0.5\textwidth}
\centering\includegraphics[scale=.27]{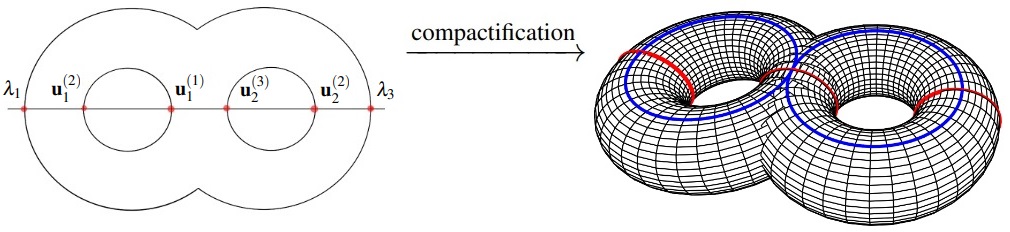} 
\caption{${\bf{u}}_{1}^{(1)} \geq {\bf{u}}_{2}^{(3)}$.}
\label{fig_GT_U4}
\end{subfigure}
\begin{subfigure}{0.5\textwidth}
\centering\includegraphics[scale=.27]{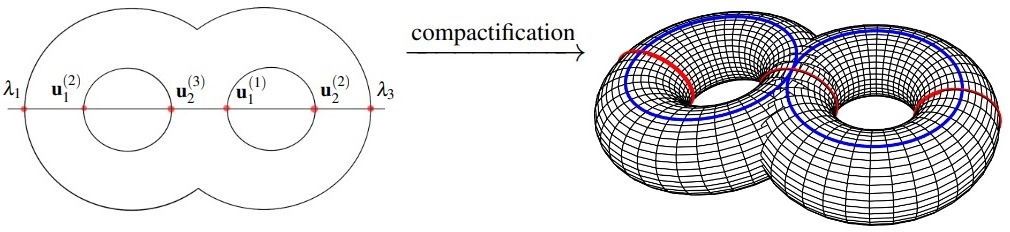}
\caption{$ {\bf{u}}_{2}^{(3)}\geq {\bf{u}}_{1}^{(1)}$.}
\label{fig_GT_U4_1}
\end{subfigure}
\caption{Riemann surface which compactifies $\mathcal{C}_{Z}$, for all $Z \in O(\Lambda)^{\mathscr{H}}$.}
\end{figure}
Under the projection $\mathbb{R}^{4} \to \mathbb{R}$, $\mathscr{H} = \big ( \lambda_{2}^{(3)},\lambda_{2}^{(2)},\lambda_{1}^{(2)},\lambda_{1}^{(1)}\big) \mapsto \lambda_{2}^{(3)}$, we have that $\Delta_{\Lambda}$ is fibered over $[\lambda_{3},\lambda_{1}]$ by Gelfand-Tsetlin polytopes defined by patterns as in Eq. \ref{GTpatternsU(3)}, and the fiber shrinks to a two dimensional triangle on the boundaries $\lambda_{2}^{(3)} = \lambda_{1},\lambda_{3}$ (Fig. \ref{GTpolytopeGr(2,4)}). By taking ${\bf{u}}_{2}^{(3)} = {\bf{u}}_{2}^{(2)} = {\bf{u}}_{1}^{(2)} = {\bf{u}}_{1}^{(1)} = s$, such that $s \in [\lambda_{3},\lambda_{1}]$, we have $\mathscr{H}^{-1}({\bf{u}}) \cong S^{3} \times S^{1}$, if $\lambda_{1} > s > \lambda_{3}$, and $\mathscr{H}^{-1}({\bf{u}}) = {\text{point}}$, if $s = \lambda_{1}$ or $\lambda_{3}$. More precisely, for every ${\bf{u}} \in \partial \Delta_{\Lambda}$, $\mathscr{H}^{-1}({\bf{u}})$ is a Lagrangian fiber if and only if ${\bf{u}} = (s,s,s,s)$, such that $\lambda_{1} > s > \lambda_{3}$. Fibers over other boundary faces are lower dimensional isotropic torus, see for instance \cite[Theorem 1.2]{NU}.
\begin{figure}[H]
\centering\includegraphics[scale=.15]{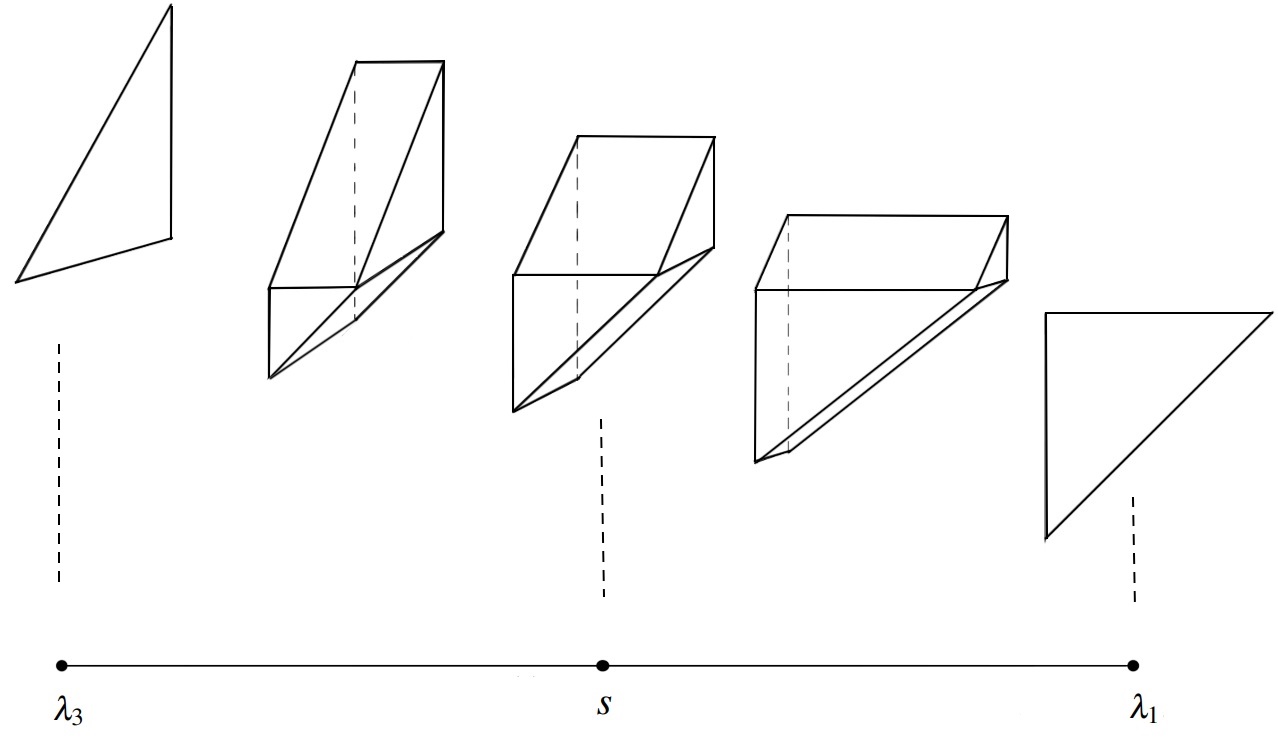}
\caption{The Gelfand-Tsetlin polytope for ${\rm{Gr}}(2,4)$.}
\label{GTpolytopeGr(2,4)}
\end{figure}

From item 1) of Theorem \ref{theo2proof}, for every $Z \in {\rm{Gr}}(2,4)$, we have $F_{\Lambda}(w;Z) = P_{1}(w;Z)P_{2}(w;Z)P_{3}(w;Z)$, such that 
\begin{center}
$P_{1}(w;Z) = \big (w-\lambda_{1}^{(1)}(Z)\big)$, \ \ $P_{2}(w;Z) = \big (w - \lambda_{1}^{(2)}(Z)\big)\big (w-\lambda_{2}^{(2)}(Z)\big)$ \ \ and \ \ $P_{3}(w;Z) = \big (w - \lambda_{2}^{(3)}(Z)\big)$.
\end{center}
Hence, for all $Z \in {\rm{Gr}}(2,4)$, such that ${\rm{Sing}}(\mathcal{C}_{Z}) \neq \emptyset$, we have 
\begin{center}
$0 \leq {\rm{Res}}\Big(\frac{P'_{k}(w;Z)}{P_{k}(w;Z)};\alpha\Big) \leq 1,$
\end{center}
for $k = 1,3$ and for all $(\alpha,0) \in {\rm{Sing}}(\mathcal{C}_{Z})$. In the above setting, note that $1 \leq {\rm{Ord}}_{p}(\mathcal{C}_{Z}) \leq 5$, $\forall p \in \mathcal{C}_{Z}$. Combining these last facts with item 4) and item 5) of Theorem \ref{theo2proof}, we conclude from the Gelfand-Tsetlin patterns (Eq. \ref{GTpatternGrassmann}) that, for every ${\bf{u}} \in \Delta_{\Lambda}$ and every $Z \in \mathscr{H}^{-1}({\bf{u}})$, the following holds:
\begin{enumerate}
\item (Lagrangian torus) ${\rm{Ord}}_{p}(\mathcal{C}_{Z}) = 1$, $\forall p \in \mathcal{C}_{Z}$, if and only if $\mathscr{H}^{-1}({\bf{u}}) \cong T^{4}$;
\item (Isotropic non-Lagrangian) ${\rm{Ord}}_{p}(\mathcal{C}_{Z})=5$, for some $p \in {\rm{Sing}}(\mathcal{C}_{Z})$, or $0 \leq {\rm{Res}}\Big(\frac{P'_{k}(w;Z)}{P_{k}(w;Z)};\alpha\Big) \leq 1$, for all $(\alpha,0) \in {\rm{Sing}}(\mathcal{C}_{Z})$, and for all $1 \leq k \leq 3$, if and only if $\mathscr{H}^{-1}({\bf{u}}) = {\text{point}}$ or $\mathscr{H}^{-1}({\bf{u}}) \cong T^{d}$, $d = 1,2,3$;
\item (Non-torus Lagrangian) ${\rm{Ord}}_{p}(\mathcal{C}_{Z})=4$ and ${\rm{mult}}_{\alpha}(P_{2}(w;Z)) > 1$, for some $p = (\alpha,0) \in {\rm{Sing}}(\mathcal{C}_{Z})$, if and only if $\mathscr{H}^{-1}({\bf{u}}) \cong S^{3} \times S^{1}$.

\end{enumerate}
As in the previous example, we have a classification for the leaves of the Liouville foliation defined by the Gelfand-Tsetlin integrable system $({\rm{Gr}}(2,4),\omega_{{\rm{Gr}}(2,4)},\mathscr{H} )$ in terms of $\mathscr{M}_{\Lambda}$.
\end{example}



\end{document}